\newtheorem {lemme} {Lemma} [section]
\newtheorem {theoreme} {Theorem} [section]
\newtheorem {proposition} {Proposition} [section]
\newtheorem {remarque} {Remark} [section]
\newtheorem {definition} {Definition} [section]
\newtheorem {exemple} {Example} [section]
\newcommand{\tr}{{\rm tr}}
\newcommand{\R}{\mathbb {R}}
\numberwithin{equation}{section}
\newcommand{\vers}{\mathop{\longrightarrow }}
\title{Outliers in the spectrum of large deformed unitarily invariant models}
\author{S. T. Belinschi, H. Bercovici, M. Capitaine, M. F\'evrier}
\begin{document}

\maketitle

\begin{abstract}
We investigate the asymptotic behavior of the eigenvalues of 
the sum $A_N+U_N^*B_NU_N$, where $A_N$ and $B_N$ are deterministic 
$N\times N$ Hermitian matrices having respective limiting compactly 
supported distributions $\mu$ and $\nu$, and $U_N$ is a random $N\times 
N$ unitary matrix distributed according to Haar measure. 
We assume that $A_N$ has a fixed number of fixed eigenvalues (spikes) 
outside the support of $\mu $ whereas the distances between the other 
eigenvalues of $A_N$ and the support of $\mu $, and between the 
eigenvalues of $B_N$ and the support of $\nu $ uniformly go to zero as 
$N$ goes to infinity. We establish that only a particular subset of the 
spikes will generate some eigenvalues of $A_N+U_N^*B_NU_N$ 
outside the support of the limiting spectral measure, called outliers. 
This phenomenon is fully described in terms of free probability 
involving the subordination function related to the free additive 
convolution of $\mu $ and $\nu $. Only finite rank 
perturbations had been considered up to now. 
\end{abstract}

\section{Introduction}

The set of possible spectra for the sum of two deterministic matrices $A_N$ and $B_N$ 
depends in complicated ways on the spectra of $A_N$ and $B_N$ (see \cite{Fulton98}). 
Nevertheless, if one adds some randomness to the eigenspaces and 
assumes them to be in generic position with respect to each other, 
when $N$ becomes large, free probability provides  a good understanding 
of the global behavior of the spectrum of the sum of matrices. Indeed,
if $X_N=A_N+U_N^*B_NU_N$, where $U_N$ is a Haar unitary random matrix 
(i.e. from the set of unitary matrices equipped with the normalized Haar measure as probability measure), 
if the spectral measures of $A_N$ and $B_N$ converge weakly towards 
respective compactly supported distributions $\mu $ and $\nu $, then 
building on the groundbreaking result of Voiculescu \cite{Voiculescu91}, Speicher proved in \cite{Speicher93a} 
the almost sure weak convergence of the spectral measure of $X_N$ to the free convolution $\mu \boxplus \nu $, 
which is known to be a compactly supported probability measure on $\mathbb{R}$.  
We refer the reader to \cite{VDN92} for an introduction to free probability theory. 

In \cite{BGRao09}, the authors investigated the case where $A_N$ has  a finite rank $r$ independent of $N$. 
More precisely, they considered a deterministic Hermitian perturbation matrix $A_N$ 
having $r$ non-zero eigenvalues $\gamma_1\geq \cdots \geq \gamma_s > 0 > \gamma_{s+1} \geq \cdots \geq \gamma_r$.
Note that in that case, $\mu\equiv \delta_0$ and the global limiting behavior
of the spectrum of $X_N$  is not affected by such a matrix $A_N$.
Thus,   the spectral measure of $X_N=A_N+U_N^*B_NU_N$ still converges 
to the limiting spectral measure of $B_N$. 
Nevertheless, in \cite{BGRao09}, the authors uncovered a phase transition phenomenon 
whereby the limiting value of the  extreme eigenvalues
of $X_N$ differs from that of $B_N$ if and only if 
the eigenvalues of $A_N$ are above a certain critical threshold:

\begin{theoreme}[Theorem 2.1 in \cite{BGRao09}]\label{BGRao}
Denote by $\lambda_1(X_N) \geq \cdots \geq \lambda_N(X_N)$ the ordered eigenvalues of $X_N$.
Let $a$ and $b$ be respectively the infimum and supremum of the support of $\nu$. 
Assume that the smallest and largest eigenvalue of $B_N$ converge almost surely to $a$ and $b$.
Then, we have for each $1\leq i \leq s$, almost surely, 
$$\lambda_i(X_N) \rightarrow_{N \rightarrow + \infty} \left\{ \begin{array}{ll} G_{\nu}^{-1}(1/\gamma_i) \mbox{~~if~~} \gamma_i > 1/\lim_{z\downarrow b} G_\nu( z),\\ b \mbox{~~otherwise}, \end{array}\right.$$
while for each fixed $i>s$,  almost surely, $\lambda_i(X_N) \rightarrow_{N \rightarrow + \infty} b.$
Similarly, for the smallest eigenvalues, 
we have for each $0\leq j <r-s$, almost surely, 
$$\lambda_{N-j}(X_N) \rightarrow_{N \rightarrow + \infty} \left\{ \begin{array}{ll} G_{\nu}^{-1}(1/\gamma_{r-j}) \mbox{~~if~~} \gamma_{r-j} < 1/\lim_{z\uparrow a} G_\nu( z),\\ a \mbox{~~otherwise}, \end{array}\right.$$
while for each fixed $j\geq r-s$,  almost surely, $\lambda_{N-j}(X_N) \rightarrow_{N \rightarrow + \infty} a.$
Here, $$G_\nu\colon\mathbb{C}\setminus \mathrm{supp}(\nu )\to \mathbb{C},\quad 
G_\nu(z)=\int _\mathbb{R}\frac{d\nu (t)}{z-t},$$
is the Cauchy-Stieltjes transform of $\nu $, $G_{\nu}^{-1}$ is its functional inverse.
\end{theoreme}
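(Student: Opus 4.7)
The plan is to reduce the problem to an $r\times r$ secular equation via a Schur/Sylvester determinantal identity, and then use concentration of Haar measure to identify the limiting equation explicitly in terms of $G_\nu$.

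Write $A_N=V_ND_NV_N^{*}$ with $V_N$ an $N\times r$ isometry and $D_N=\mathrm{diag}(\gamma_1,\ldots,\gamma_r)$. For $\lambda\notin\mathrm{Spec}(U_N^{*}B_NU_N)$, one has the factorisation
\[
X_N-\lambda I_N=-(\lambda I_N-U_N^{*}B_NU_N)\bigl(I_N-(\lambda I_N-U_N^{*}B_NU_N)^{-1}V_ND_NV_N^{*}\bigr),
\]
so by Sylvester's identity $\lambda$ is an eigenvalue of $X_N$ iff
\[
\det\bigl(I_r-D_NM_N(\lambda)\bigr)=0,\qquad M_N(\lambda):=W_N^{*}(\lambda I_N-B_N)^{-1}W_N,\quad W_N:=U_NV_N .
\]
Since $V_N$ is deterministic and $U_N$ is Haar, $W_N$ is distributed as a uniformly random $N\times r$ partial isometry; in particular $\E[W_N^{*}XW_N]=N^{-1}\tr(X)\,I_r$ for any deterministic $X$.

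Next, apply Haar-concentration (e.g.\ the Gromov--Milman inequality for the unitary group, which is Lipschitz-concentrating) to the entries of $M_N$. For any $\lambda$ in a compact subset of $\C\setminus\mathrm{supp}(\nu)$, the matrix $(\lambda I_N-B_N)^{-1}$ has operator norm uniformly bounded in $N$ (using $\lambda_1(B_N)\to b$, $\lambda_N(B_N)\to a$), and $N^{-1}\tr(\lambda I_N-B_N)^{-1}\to G_\nu(\lambda)$ by weak convergence of the spectrum of $B_N$. Hence $M_N(\lambda)\to G_\nu(\lambda)\,I_r$ almost surely, uniformly on compact subsets of $\C\setminus\mathrm{supp}(\nu)$. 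Consequently the holomorphic function $f_N(\lambda)=\det(I_r-D_NM_N(\lambda))$ converges locally uniformly to
\[
f(\lambda)=\det\bigl(I_r-G_\nu(\lambda)D_N\bigr)=\prod_{i=1}^{r}\bigl(1-\gamma_i G_\nu(\lambda)\bigr).
\]
By Hurwitz's theorem, the zeros of $f_N$ in any compact $K\subset\{\lambda>b\}\cup\{\lambda<a\}$ converge (with multiplicities) to the zeros of $f$ in $K$.

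On $(b,+\infty)$, $G_\nu$ is real analytic, strictly decreasing from $\lim_{z\downarrow b}G_\nu(z)\in(0,+\infty]$ to $0$, so the equation $G_\nu(\lambda)=1/\gamma_i$ has a solution $\rho_i:=G_\nu^{-1}(1/\gamma_i)>b$ iff $\gamma_i>1/\lim_{z\downarrow b}G_\nu(z)$, and the solution is unique; the analogue holds on $(-\infty,a)$. Let $k$ be the number of super-threshold positive spikes and $\ell$ the number of sub-threshold negative ones, ordered as in the statement. The Hurwitz step then gives exactly $k$ eigenvalues of $X_N$ in each compact $K\subset(b,+\infty)$ containing $\rho_1,\ldots,\rho_k$ (and none above $\max\rho_i+1$, by the same argument at $\infty$). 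Combined with the Voiculescu--Speicher global convergence $\mu_{X_N}\to\delta_0\boxplus\nu=\nu$, which guarantees $\lambda_i(X_N)\geq b-\varepsilon$ for any fixed $i$ and eventually, one concludes that the top $k$ eigenvalues converge to $\rho_1,\ldots,\rho_k$ and the remaining $\lambda_i(X_N)$ ($i>k$, in particular all $i>s$) converge to $b$; the bottom eigenvalues are handled symmetrically.

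\emph{Main obstacle.} The delicate point is the almost sure uniform convergence $M_N(\lambda)\to G_\nu(\lambda)I_r$ together with the correct multiplicity counting: one needs a quantitative concentration inequality for $W_N^{*}(\lambda I_N-B_N)^{-1}W_N$ that is strong enough (via Borel--Cantelli) to upgrade pointwise a.s.\ convergence to uniform a.s.\ convergence on a suitable compact exterior of $\mathrm{supp}(\nu)$, and to rule out, via the master equation, the possibility that a spurious eigenvalue of $X_N$ escapes to the gap $(b,\rho_k)$ or diverges to $\pm\infty$. Once this is in place, the Hurwitz argument and global convergence together pin down both the location and the count of outliers.
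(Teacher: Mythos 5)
Your proposal reproduces, specialized to the case $\mu=\delta_0$, exactly the strategy of \cite{BGRao09} and of this paper's proof of the general Theorem~\ref{mainresult}: reduce via the Sylvester determinant identity $\det(I_N-XY)=\det(I_r-YX)$ to an $r\times r$ secular function, establish its a.s.\ locally uniform convergence by Haar concentration (your limit $G_\nu(\lambda)I_r$ is the paper's $\chi(\lambda)I_r$, since $\omega_1=F_\nu$ and $\alpha=0$ when $\mu=\delta_0$), and count outliers with Hurwitz. The technical points you flag as the ``main obstacle'' are precisely what the paper handles through Lemma~\ref{concentration} (Borel--Cantelli upgrade), Theorem~\ref{complexanalysislemma} (Vitali-type uniformity), and Lemma~\ref{alt-Benaych-Rao} (multiplicity counting and exclusion of spurious zeros), so the approach is essentially the same.
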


Note that \cite{BGRao09} lies in the lineage of recent works studying the
influence of some finite rank additive or multiplicative perturbations on the extremal eigenvalues of
classical random matrix models, the seminal paper being \cite{BBP05} 
where Baik, Ben Arous and P\'ech\'e pointed out the so-called BBP phase transition
(cf. \cite{John, BBP05, BaikSil06} for sample covariance matrices, 
\cite{FurKom81, Peche06,FePe, CDF09, PRS} for deformed Wigner models 
and \cite{LV} for information-plus-noise models). 
Such problems were first extended to non-finite rank perturbations 
in \cite{RaoSil09} and \cite{BaiYao08b} for sample covariance matrices 
and in \cite{CDFF10} for  deformed Wigner models. 
In this last paper, the authors pointed out that the subordination function 
relative to the free additive convolution of a semicircular distribution
with the limiting spectral distribution of the perturbation plays an important role 
in the fact that some eigenvalues of the deformed Wigner model separate from the bulk.
Note that in \cite{Capitaine11}, the author explained how the results of \cite{RaoSil09} and \cite{BaiYao08b} 
in the sample covariance matrix setting can also be described in terms of free probability
involving the subordination function related to the free multiplicative
convolution of a Marchenko-Pastur distribution with the limiting spectral distribution
of the multiplicative perturbation.

In this paper, we investigate the asymptotic spectrum of the model $X_N=A_N+U_N^*B_NU_N$ 
without the extra requirement that one of the limiting spectral measures $\mu$
and $\nu$ of the respective deterministic Hermitian matrices $A_N$ and $B_N$ is a point mass. 
We assume that $A_N$ has a fixed number $r$ of fixed eigenvalues (spikes) 
outside the support of $\mu $ whereas the distances between the other eigenvalues of $A_N$ 
and the support of $\mu $, and between the eigenvalues of $B_N$ and the support of $\nu $ 
uniformly go to zero as $N$ goes to infinity. 
We are interested in the following questions: are some of the eigenvalues of $X_N$ 
almost surely separating from the bulk, that is being located outside
the support of the limiting spectral distribution $\mu \boxplus \nu $? What is 
the  characterization  of the spiked eigenvalues of $A_N$ 
that generate such outliers in the spectrum of $X_N$? 
Is there an interpretation of these phenomena in terms of the subordination functions 
related to the free additive convolution of $\mu$ and $\nu$?

In the particular case where $r=0$, it is proved in \cite{ColMal11} 
that, almost surely, for large enough $N\in \mathbb{N}$, 
all eigenvalues of $X_N$ are included in a neighborhood of 
the support of the limiting spectral distribution $\mu \boxplus \nu $.
In the following, we will therefore assume that $r\geq 1$.
Thus, this paper may be seen as an extension of \cite{BGRao09} 
since it extends the framework of \cite{BGRao09} to non-finite rank perturbations
but also as  an extension of \cite{CDFF10} since it extends 
the free probabilistic interpretation of outliers phenomena 
in terms of subordination functions described  in \cite{CDFF10}
for Wigner deformed models to deformed unitarily invariant models.
Here, the characterization in terms of subordination functions of the spiked eigenvalues 
of $A_N$ that generate  outliers in the spectrum of $X_N$ turns out to be 
more complex that the one presented in \cite{CDFF10} for deformed Wigner models, 
but it is a completely natural extension as we explain in Remark \ref{infinidiv}. 
It is worth noticing that we uncover here a new phenomenon: 
a single  spiked eigenvalue of $A_N$ may generate asymptotically 
a finite or countably infinite set of outliers of $X_N$. 
This comes from the fact that the restriction to the real line of some subordination functions 
may be many-to-one, unlike the subordination function 
related to free convolution with a semicircular distribution 
studied in \cite{CDFF10}.

The approach of the proof of our main result (i.e Theorem \ref{mainresult}) 
is in the spirit of \cite{BGRao09} and comes down to prove the almost sure convergence 
of a certain $r \times r$ matrix, involving the resolvent of the deformation of $U_N^*B_NU_N$
by some matrix $A_N'$ without spikes. Its almost sure convergence is proved by establishing 
an approximate matricial subordination equation and using  a concentration argument.
Then, the problem is reduced to solving an equation involving the spikes 
and the subordination function related to the free convolution of $\mu$ and $\nu$. 

The paper is organized as follows. In Section 2, we introduce the additive deformed models 
we consider in this paper; we also introduce some basic notations that will be used throughout the paper.
Section 3 is devoted to definitions and results concerning free convolution and subordination functions, 
some of them being necessary to state our main result Theorem \ref{mainresult} in Section 4. 
The proof of  Theorem \ref{mainresult} is presented in Sections 4 and 5. 
More precisely, we explain in Section 4 how the proof comes down to prove the almost sure convergence 
of a $r \times r$ matrix and Section 5 deals with the proof of this convergence.

\section{Notations and presentation of the model}

\noindent Throughout this paper, we will use the following notations.
\begin{itemize}
\item[-] $\mathbb{C}^+$ will denote the complex upper half-plane $\{z \in \mathbb{C}, \, \Im z >0\}$. Similarly,
$\mathbb{C}^-$ will stand for
$\{z \in \mathbb{C}, \, \Im z <0\}$.
\item[-] We will denote by $M_{m}(\mathbb{C})$ the set of $ m\times m$ matrices with complex entries and $GL_m(\mathbb{C})$ the subset of invertible ones.
$\Vert  ~ \Vert $ will denote the operator norm.
\item[-]  For any matrix $M$, we will denote its kernel by Ker($M$).
\item[-] $E_{ij}$ stands for the  matrix such that $(E_{ij})_{kl}=\delta_{ik}\delta_{jl}$.
\item[-] For any $N\times N$ Hermitian matrix $M$, we will denote by 
$$\lambda_1(M) \geq \ldots \geq \lambda_N(M)$$
\noindent its ordered eigenvalues.
\item[-] For a probability measure $\tau $ on $\R$, 
we denote by $\mbox{supp}(\tau)$ its topological support. 
\item[-] $C, C_1, C_2, C'$ denote nonnegative constants which may vary from line to line.
\end{itemize}

In this note, we consider the following model $X_N=A_N+U_N^*B_NU_N$, where: 

\begin{itemize}
\item $A_N$ is a deterministic $N\times N$ Hermitian matrix whose spectral measure 
$\mu _{A_N} := \frac{1}{N} \sum_{i=1}^N \delta _{\lambda _i(A_N)}$ 
weakly converges to some compactly supported probability measure $\mu $ on $\mathbb{R}$. 
We assume that there exists a fixed integer $r\geq 0$ (independent from $N$) 
such that $A_N$ has $N-r$ eigenvalues $\alpha _j^{(N)}$ satisfying 
$$\max _{1\leq j\leq N-r} \mathrm{dist}(\alpha _j^{(N)},\mathrm{supp}(\mu ))\vers _{N \rightarrow \infty } 0.$$ 
We also assume that there are $J$ fixed real numbers $\theta _1 > \ldots > \theta _J$ 
independent of $N$ which are outside the support of $\mu $ and such that each $\theta _j$ 
is an eigenvalue of $A_N$ with a fixed multiplicity $k_j$ (with $\sum_{j=1}^J k_j=r$). 
The $\theta_j$'s will be called the spikes or the spiked eigenvalues of $A_N$. 
\item $B_N$ is a deterministic $N\times N$ Hermitian matrix whose spectral measure 
$\mu _{B_N} := \frac{1}{N} \sum_{i=1}^N \delta _{\lambda _i(B_N)}$ 
weakly converges to some compactly supported probability measure $\nu $ on $\mathbb{R}$. 
We assume that the eigenvalues $\beta _j^{(N)}$ of $B_N$ satisfy 
$$\max _{1\leq j\leq N} \mathrm{dist}(\beta _j^{(N)},{\rm supp}(\nu ))\vers _{N \rightarrow \infty } 0.$$
\item $U_N$ is a random $N\times N$ unitary matrix distributed according to Haar measure. 
\end{itemize} 

\section{Free convolution}\label{freeconv}

Free  convolution appears as a natural analogue of the classical convolution 
in the context of free probability theory.
Denote by ${\cal M}$ the set of Borel probability measures 
supported on the real line. 
For $\mu$ and $\nu$ in ${\cal M}$ one defines the free additive convolution 
$\mu \boxplus \nu$ of $\mu$ and $\nu$ as the distribution of 
$X+Y$ where $X$ and $Y$ are free self-adjoint random variables with distribution $\mu$ and $\nu$. 
We refer the reader to \cite{VDN92} for an introduction to free probability theory
and to \cite{Voiculescu86} 
and \cite{BercoviciVoiculescu} 
for free convolution. In this section, we recall the analytic approach 
developed in \cite{Voiculescu86} to calculate the free convolution of measures, 
we present the important subordination property and some related  
fundamental results we will refer to later.

\subsection{Additive Free convolution}

For any positive 
finite Borel measure $\tau $ on $\mathbb{R}$, 
the Cauchy-Stieltjes transform of $\tau $
$$G_\tau\colon\mathbb{C}\setminus \mathrm{supp}(\tau )\to \mathbb{C},\quad 
G_\tau(z)=\int _\mathbb{R}\frac{d\tau (t)}{z-t}$$
is analytic, maps the upper half-plane $\mathbb{C}^+$ into the
lower half-plane $\mathbb{C}^-$ and satisfies the conditions $G_\tau(
\overline{z})=\overline{G_\tau(z)},$ 
$\lim_{y\to +\infty }iyG_\tau (iy)=\tau(\mathbb{R})$. These conditions in fact 
characterize functions which are Cauchy-Stieltjes transforms of positive
finite measures. For probability measures $\tau $ with compact support,
$G_\tau $ is analytic on the neighbourhood of infinity
$\{ z\in \mathbb{C}\colon |z| > \max \{ |x|\colon x\in \mathrm{supp}(\tau )\} \} $.
The Cauchy-Stieltjes transform allows us to recover 
the measure $\tau $ as the weak${}^*$-limit 
$$d\tau (x)=\lim _{y\to 0}\frac{-1}{\pi }\Im G_\tau (x+iy).$$
For the absolutely continuous part of $\tau $, the situation is better:
the relation $$\frac{d\tau }{dx}=\lim _{y\to 0}\frac{-1}{\pi }\Im G_\tau (x+iy)$$
holds Lebesgue-a.e. as equality of functions. 
For some purposes, it is convenient to work with the reciprocal Cauchy-Stieltjes transform, 
which is the analytic self-map of the upper half-plane defined by: 
$$\forall z \in \mathbb{C}^+,\ F_\tau(z)=\frac{1}{G_\tau(z)}.$$ 
The Cauchy-Stieltjes transform of a compactly supported 
probability measure $\tau $ is invertible in the neighborhood of infinity, 
with functional inverse, denoted by $G_{\tau}^{-1}$, defined in a neighborhood of $0$. 
Define then the R-transform of $\tau $ 
by: $$R_\tau (z) = G_{\tau}^{-1}(z) - \frac{1}{z}.$$
Given two compactly supported probability measures $\mu $ and $\nu $, 
there exists a unique probability measure $\lambda $ such that
$$R_\lambda = R_\mu + R_\nu $$
on a domain where these functions are defined. 
The probability measure $\lambda $ is called 
the free additive convolution of $\mu $ and $\nu $ and denoted by $\mu \boxplus \nu $. 
The support of the probability measure $\mu \boxplus \nu $ being a compact set, 
we will denote it by $$K:=\mathrm{supp}(\mu \boxplus \nu ).$$
Moreover, given $\varepsilon > 0$, we will use the following notations: 
$$K_\varepsilon ^\mathbb{R}:=\{x\in \mathbb{R}~|~d(x,\mathrm{supp}(\mu \boxplus \nu)\leq \varepsilon \},$$
$$K_\varepsilon ^\mathbb{C}:=\{z\in \mathbb{C}~|~d(z,\mathrm{supp}(\mu \boxplus \nu)\leq \varepsilon \}.$$

\subsection{Free subordination phenomenon}

We recall the subordination phenomenon for the Cauchy-Stieltjes 
transform of the free additive convolution of measures. 
Given Borel probability measures $\mu $ and $\nu $ on $\mathbb{R}$, 
the Cauchy-Stieltjes transform of the free additive convolution $\mu \boxplus \nu$
is subordinated to the Cauchy-Stieltjes transform of any of $\mu $ or $\nu $: 
there are two analytic self-maps of the upper half-plane $\omega_1,\omega_2:\mathbb{C}^+\to \mathbb{C}^+$
such that: $$\forall z\in \mathbb{C}^+,\ G_{\mu \boxplus \nu}(z)=G_\mu (\omega_1(z))=G_\nu(\omega_2(z)).$$
The subordination maps $\omega_1,\omega_2$ are also related by: 

\begin{equation}\label{subordination3}
\forall z\in \mathbb{C}^+,\ \omega_1(z)+\omega_2(z)=z+F_{\mu \boxplus \nu}(z). 
\end{equation}
Moreover, for $j\in \{1,2\}$, $$\lim_{y\rightarrow +\infty}\frac{\omega_j(iy)}{iy}=1.$$
It then follows, using Nevanlinna representation of analytic self-maps of the upper half-plane, 
that: $$\forall z\in \mathbb{C}^+,\ \Im \omega_j(z)\geq \Im z;$$
equality can occur only when one of the measures $\mu , \nu $ is a point mass 
(more specifically, $\exists z\in \mathbb{C}^+,\ \Im\omega_1(z)=\Im z\iff 
\forall z\in \mathbb{C}^+,\ \Im \omega_1(z)=\Im z \iff \nu$ is a 
point mass, and $\exists z\in \mathbb{C}^+,\ \Im\omega_2(z)=\Im z\iff 
\forall z\in \mathbb{C}^+,\ \Im \omega_2(z)=\Im z \iff \mu$ is a point mass).
These results, first obtained in full generality in \cite{Biane98}, have been given a new interpretation 
in terms of Denjoy-Wolff points of analytic functions in \cite{BelBer07}. 
We will state the upper half-plane version of the Denjoy-Wolff
theorem below:

If $f\colon\mathbb C^+\to\mathbb C^+$ is analytic and not a M\"obius
transformation, then only one of the
following three cases can occur:
\begin{enumerate}
\item there exists a unique point $\omega\in\mathbb C^+$ so that
$f(\omega )=\omega $ and $|f'(\omega)|<1$. Then the iterations
$f^{\circ n}$ converge uniformly on compacts to the constant function
taking the value $\omega$; 
\item there exists a unique $\omega\in\mathbb R$ so that 
$\lim_{r\downarrow0}f(ir+\omega)=\omega$ {\em and} 
$$0<\lim_{r\downarrow0}\frac{f(ir+\omega)-\omega}{ir}\leq 1.$$
Then the iterations
$f^{\circ n}$ converge uniformly on compacts to the constant function
taking the value $\omega$;
\item $\lim_{r\uparrow+\infty}f(ir)=\infty$ {\em and} 
$$1\leq\lim_{r\uparrow+\infty}\frac{f(ir)}{ir}<\infty.$$
Then the iterations
$f^{\circ n}$ converge uniformly on compacts to infinity.
\end{enumerate}
The point $\omega$ in situations 1. and 2. (and infinity in 3.)
is called the Denjoy-Wolff point of $f$.
Note that the second limit in 2. above always exists in $(0;+\infty]$ as soon as $\lim_{r\downarrow0}f(ir+\omega)=\omega $, 
and is called the Julia-Carath\'eodory derivative of $f$ at $\omega $. 
Denjoy and Wolff proved that any analytic function $f
:\mathbb{C}^+ \rightarrow{\mathbb{C}^+}$ as above 
has a Denjoy-Wolff point
(see \cite{GarnettBook} for details). 
Using this result, a new proof of Biane's subordination result for free convolution was given in \cite{BelBer07}, 
by identifying $\omega _1(z)$ as the Denjoy-Wolff point of the function 
$$f_z(\omega ):=F_\nu(F_\mu (\omega )-\omega +z)-(F_\mu (\omega )-\omega+z )+z.$$

We will need the following lemma collecting results on extensions of the subordination maps: 

\begin{lemme} \label{extension}
For $j\in\{1,2\}$, the function $\omega _j$, defined on $\mathbb{C}^+$, 
has an extension (still denoted by $\omega _j$) to $\mathbb{C}$ so that: 
\begin{enumerate}
\item[(a)] $\omega _j$ is continuous on $\mathbb{C}^+\cup \mathbb{R}$;
\item[(b)] $\omega _1(\{\infty\}\cup\mathbb{R}\setminus \mathrm{supp}(\mu \boxplus \nu )) \subseteq\{\infty\}\cup \mathbb{R}\setminus \mathrm{supp}(\mu );$
\item[(b')] $\omega _2(\{\infty\}\cup\mathbb{R}\setminus \mathrm{supp}(\mu \boxplus \nu )) \subseteq\{\infty\}\cup \mathbb{R}\setminus \mathrm{supp}(\nu );$
\item[(c)] $\forall z\in \mathbb{C}\setminus \mathbb{R}, \overline{\omega_j(z)}=\omega_j(\overline{z});$
\item[(d)] $\omega _j$ is meromorphic on 
$\mathbb{C}\setminus \mathrm{supp}(\mu \boxplus \nu )$.
\end{enumerate}
\end{lemme}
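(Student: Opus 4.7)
The plan is threefold: (i) extend $\omega_j$ from $\mathbb{C}^+$ to $\mathbb{C}^-$ by Schwarz reflection; (ii) establish continuous boundary values on $\mathbb{R}\setminus\mathrm{supp}(\mu\boxplus\nu)$ using the subordination identities recalled in Section~\ref{freeconv}; and (iii) upgrade continuity to meromorphy on $\mathbb{C}\setminus\mathrm{supp}(\mu\boxplus\nu)$ via the Schwarz reflection principle. Step (i) delivers (c) at once: setting $\omega_j(z):=\overline{\omega_j(\bar z)}$ for $z\in\mathbb{C}^-$ produces an analytic function on $\mathbb{C}\setminus\mathbb{R}$ that maps $\mathbb{C}^-$ into $\mathbb{C}^-$.

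For (ii), fix $x_0\in\mathbb{R}\setminus\mathrm{supp}(\mu\boxplus\nu)$. Since $G_{\mu\boxplus\nu}$ is a Cauchy--Stieltjes transform of a measure concentrated on $\mathrm{supp}(\mu\boxplus\nu)$, it is analytic in a neighborhood of $x_0$, so $F_{\mu\boxplus\nu}$ is meromorphic there. The sum identity \eqref{subordination3} then gives $\omega_1(z)+\omega_2(z)\to x_0+F_{\mu\boxplus\nu}(x_0)\in\mathbb{R}\cup\{\infty\}$ as $z=x+iy\to x_0$ in $\mathbb{C}^+$; combined with $\Im\omega_j(z)\geq\Im z>0$, this forces each $\Im\omega_j(z)\to 0$ (or $\omega_j(z)\to\infty$). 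Consequently every subsequential limit of $\omega_j(z)$ lies in $\mathbb{R}\cup\{\infty\}$.

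The step I expect to be the main obstacle is showing that such a limit $w_0$ must lie outside $\mathrm{supp}(\mu)$ for $\omega_1$ (and symmetrically outside $\mathrm{supp}(\nu)$ for $\omega_2$), thereby yielding (b) and (b'). If $w_0\in\mathrm{supp}(\mu)$ is an atom of $\mu$, then $G_\mu$ has a simple pole at $w_0$, so $F_\mu=1/G_\mu$ extends analytically across $w_0$ with $F_\mu(w_0)=0$; passing to the limit in $F_\mu(\omega_1(z_n))=F_{\mu\boxplus\nu}(z_n)$ would force $F_{\mu\boxplus\nu}(x_0)=0$, i.e.\ a pole of $G_{\mu\boxplus\nu}$ at $x_0$, contradicting the analyticity of $G_{\mu\boxplus\nu}$ across $x_0$. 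If $w_0$ lies in the continuous part of $\mathrm{supp}(\mu)$, the contradiction comes from the Nevanlinna representation of $F_\mu$, whose representing measure is carried inside $\mathrm{supp}(\mu)$ because $F_\mu$ is analytic off that set: the representation delivers a Poisson-type lower bound for $\Im F_\mu(s+it)$ that does not vanish as $(s+it)\to w_0$, clashing with $\Im F_\mu(\omega_1(z_n))=\Im F_{\mu\boxplus\nu}(z_n)\to 0$. The potentially tangential approach — the case where that Poisson lower bound could degenerate — is ruled out by running the symmetric argument on $\omega_2(z_n)\to w_0':=x_0+F_{\mu\boxplus\nu}(x_0)-w_0$ via the companion identity $F_\nu(\omega_2)=F_\mu(\omega_1)$. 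Uniqueness of the boundary limit, which completes (a), then follows from local injectivity of $F_\mu$ on each connected component of $\mathbb{R}\setminus\mathrm{supp}(\mu)$ together with the sum relation.

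Finally for (d): once $\omega_j$ is continuously extended to $\mathbb{C}^+\cup(\mathbb{R}\setminus\mathrm{supp}(\mu\boxplus\nu))$ with real (possibly $\infty$) boundary values, those values agree by construction with the conjugate-reflected extension from $\mathbb{C}^-$ of step (i). Schwarz reflection therefore provides an analytic continuation of $\omega_j$ across every real boundary point where the limit is finite; the isolated real points with limit $\infty$ are then poles. This produces the meromorphic extension of $\omega_j$ to $\mathbb{C}\setminus\mathrm{supp}(\mu\boxplus\nu)$ claimed in (d).
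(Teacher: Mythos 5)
Your plan (reflection for (c), boundary values from the sum identity, Schwarz reflection for (d)) is structurally close to the paper's, but your argument for (b) --- ruling out $\omega_1(x_0)\in\mathrm{supp}(\mu)$ --- does not close. The dichotomy ``atom vs.\ continuous part of $\mathrm{supp}(\mu)$'' is not exhaustive, and the atom case tacitly assumes the atom is isolated in $\mathrm{supp}(\mu)$ (otherwise $G_\mu$ has no simple pole there and $F_\mu$ does not extend analytically across $w_0$). Worse, the asserted Poisson-type lower bound for $\Im F_\mu(s+it)$ that does not vanish as $(s+it)\to w_0$ is false in general: writing $F_\mu(z)=z+c+\int\frac{1+tz}{t-z}\,d\sigma(t)$, one has $\Im F_\mu(s+it)=t\bigl(1+\int\frac{1+\tau^2}{|\tau-(s+it)|^2}\,d\sigma(\tau)\bigr)$, which tends to $0$ whenever $\sigma$ has no point mass and zero density at $w_0$ --- e.g.\ when $\mu$ is absolutely continuous with a density vanishing at $w_0$. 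The proposed escape (``run the symmetric argument on $\omega_2$'') is too vague to repair this: the same degeneracy can occur for $F_\nu$ at $w_0'$, and nothing in the sum relation forbids both from degenerating together.

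The clean fix, and what the paper's proof intends, is to perform the Schwarz reflection of your step (iii) \emph{before} proving (b). Once $\omega_1$ has real boundary values on the whole open interval $I\ni x_0$ complementary to $\mathrm{supp}(\mu\boxplus\nu)$, reflection makes $\omega_1$ analytic on a complex neighborhood of $I$; the inequality $\Im\omega_1(x_0+iy)\geq y$ then gives $\omega_1'(x_0)\geq1>0$, so $\omega_1$ is a local real-analytic diffeomorphism at $x_0$. Hence $G_\mu=G_{\mu\boxplus\nu}\circ\omega_1^{-1}$ extends analytically to a real neighborhood of $w_0=\omega_1(x_0)$ with real values there, and Stieltjes inversion shows $\mu$ gives no mass to that neighborhood, so $w_0\notin\mathrm{supp}(\mu)$ --- with no hypothesis on the fine structure of $\mu$ at $w_0$. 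Two smaller omissions: part (a) asks for continuity on \emph{all} of $\mathbb R$, including $\mathrm{supp}(\mu\boxplus\nu)$, which your argument does not reach; the paper cites \cite[Theorem 3.3]{Belinschi08} for this. And for (d) you should justify that $\{x:\omega_j(x)=\infty\}$ is discrete; the paper does so by identifying such $x$ with isolated poles of $F_{\mu\boxplus\nu}$ via \eqref{subordination3}.
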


\noindent {\bf Proof.} 
As noted in \cite[Theorem 3.3]{Belinschi08}, $\omega_j|_{\mathbb C^+}$ 
has a continuous extension to the real line. This proves (a). 
Letting $z$ tend to $x\in \mathbb{R}\setminus \mathrm{supp}(\mu \boxplus \nu )$ in \eqref{subordination3}, 
one notices that $\omega_j(x)$ necessarily belongs to the boundary of
$\mathbb C^+$, because so does $F_{\mu \boxplus \nu}(x)$. To be more
specific, if $F_{\mu \boxplus \nu}(x)\in\mathbb R$, then $\omega_j(x)
\in\mathbb R$ by relation \eqref{subordination3} and the Nevanlinna
representation.
If $F_{\mu \boxplus \nu}(x)=\infty,$ then $x$ is an isolated simple pole
of $F_{\mu \boxplus \nu}$, and the same \eqref{subordination3}
tells us that either $\omega_1(x)=\infty$ and $\omega_2(x)=
-m_1(\mu)+x$, or $\omega_2(x)=\infty$ and $\omega_1(x)=-m_1(\nu)+x$,
and thus $x$ is an isolated simple pole of exactly one of the two $\omega_j$.
(Here $m_1(\tau)$ denotes the first moment of $\tau$.)
This proves (b), (b') and (d). 
We then extend $\omega_j$ to $\mathbb{C}^-$ by requiring: 
$$\forall z\in \mathbb{C}^-, \omega_j(z)=\overline{\omega_j(\overline{z})},$$
as in \cite{Biane98}, so that (c) holds.
$\Box$\\

\noindent 
It should be noted that the proof from \cite{BelBer07} shows that
$\omega_1(z)$ is the Denjoy-Wolff point of $f_z$ only for $z\in
\mathbb C^+$. Indeed, it is still an open problem whether this is true
for all $z\in\mathbb C^+\cup\mathbb R$. However, if 
$x\in\mathbb R$ is so that $\omega_1$ is analytic in $x$ and
real on some interval around $x$, the Julia-Carath\'eodory Theorem 
requires $\omega_1'(x)\in(0,+\infty)$. 
As noted in the above lemma, this implies that
$F_\mu$ is real meromorphic around $\omega_1(x)$ and $F_\nu$ real 
analytic around $\omega_2(x)=F_\mu(\omega_1(x))-\omega_1(x)+x$.
Thus, $f_x$ will be real analytic around $\omega_1(x)$.
Taking limits shows that $f_x(\omega_1(x))=\omega_1(x)$
and $\omega_1'(x)=\partial_xf_x(\omega_1(x))\left[1-\partial_\omega
f_x(\omega_1(x))\right]^{-1}.$
Since $\partial_xf_x(\omega_1(x))=F_\nu'(\omega_2(x))>0$, in
order for $\omega_1'(x)>0$ as required by the Julia-Carath\'eodory
Theorem, we must have $\partial_\omega
f_x(\omega_1(x))<1$, and hence $\omega\mapsto f_x(\omega)$ has
$\omega_1(x)$ as a fixed point in which the derivative is 
less than one. This is the case 2. of the upper half-plane
version of the Denjoy-Wolff theorem given above. Thus, in this
case $\omega_1(x)$ is still necessarily the {\em unique}
Denjoy-Wolff point of $\omega\mapsto f_x(\omega)$.

Let us give a slightly different formulation for the results
of \cite{BelBer07} concerning the subordination functions, more
appropriate to the needs of our paper.
Assume neither $\mu $ nor $\nu$ is a point mass and denote 
$$h_\mu(z)=F_\mu(z)-z,\quad h_\nu(z)=F_\nu(z)-z.$$
We re-write the ideas of \cite{BelBer07}, where 
$\omega_1$ and $\omega_2$ are
identified as the Denjoy-Wolff points of self-maps of 
$\mathbb C^+$ indexed by $z$ (see above), but with a 
formulation chosen to make the statement somehow
independent of complex dynamics. It follows from \cite{BelBer07}
that $\omega_1$ and $\omega_2$ are
identified uniquely by the following system of equations
$$\left\{
\begin{array}{lcr} 
\omega_1(z)-h_\nu(\omega_2(z))&=&z\\
\omega_2(z)-h_\mu(\omega_1(z))&=&z
\end{array}\right..
$$
We can look upon this system as an implicit equation for a two-variable map:
we define
$f(w_1,w_2,z)=(w_1-h_\nu(w_2)-z,w_2-h_\mu(w_1)-z)$.
A straightforward application of the implicit function theorem
indicates that $h_\mu'(w_1)h_\nu'(w_2)=1$ is the only obstacle to
an analytic solution $z\mapsto(\omega_1(z),\omega_2(z))$
to the equation $f(\omega_1(z),\omega_2(z),z)=(0,0)$.
The analysis in \cite{BelBer07} shows that 
$$\forall z\in\mathbb C^+,\ |h_\mu'(\omega_1(z))h_\nu'(\omega_2(z))|<1.$$ 
This is a consequence of the fact that non-trivial 
- i.e. not M\"{o}bius - maps have derivatives less than one in their 
Denjoy-Wolff points. Since the functions $h$ and $\omega$ map 
the parts of $\mathbb R$ which lay in their domains of analyticity 
in $\mathbb R$ and their derivatives on $\mathbb R$ are necessarily
positive, we conclude that 
$0<h_\mu'(\omega_1(x))h_\nu'(\omega_2(x))<1$ 
for any $x\in\mathbb R$ in the domain of analyticity of $\omega_1
$ and $\omega_2$. In particular, 
$0<h_\mu'(\omega_1(x))h_\nu'(h_\mu(\omega_1(x))+x)<1,$
and the obstacle to an analytic solution $\omega_1$ is described 
by the equality $h_\mu'(\omega_1(x))h_\nu'(h_\mu(\omega_1(x))+x)
=1$.

\begin{theoreme}\label{outlier0}
Assume that neither $\mu $ nor $\nu$ is a point mass. 
Given $\theta \in \mathbb{R}\setminus \mathrm{supp}(\mu )$, then 
$\rho \in \mathbb{R}$ is a solution of the equation $$\omega_1 (\rho )=\theta $$ 
belonging to $\mathbb{R}\setminus \mathrm{supp}(\mu \boxplus \nu )$ 
if and only if $h_\mu(\theta)+\rho$ belongs to the domain of analyticity of $h_\nu$,
and $\rho$ is a solution of:
\begin{equation}\label{condition}
h_\nu(h_\mu(\theta)+\rho)-\theta+\rho=0,\quad 0<h_\mu'(\theta)h_\nu'(h_\mu(\theta)+\rho)<1.
\end{equation}
\end{theoreme}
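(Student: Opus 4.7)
The plan is to interpret the ``if and only if'' via the subordination system
\begin{equation*}
\omega_1(z)-h_\nu(\omega_2(z))=z,\quad \omega_2(z)-h_\mu(\omega_1(z))=z,
\end{equation*}
combined with Lemma~\ref{extension}, the implicit function theorem, and the Denjoy--Wolff characterization of $\omega_1(z)$ as the attractive fixed point of $f_z(\omega)=h_\nu(h_\mu(\omega)+z)+z$ recalled above. For the forward implication, assuming $\rho\in\mathbb{R}\setminus\mathrm{supp}(\mu\boxplus\nu)$ and $\omega_1(\rho)=\theta$, I observe that $\omega_1$ is meromorphic near $\rho$ by Lemma~\ref{extension}(d) and takes a finite value there, hence is analytic at $\rho$. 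Letting $z\to\rho$ from $\mathbb{C}^+$ in the second equation (using Lemma~\ref{extension}(a) and analyticity of $h_\mu$ at $\theta\notin\mathrm{supp}(\mu)$) gives $\omega_2(\rho)=h_\mu(\theta)+\rho\in\mathbb{R}\setminus\mathrm{supp}(\nu)$ by Lemma~\ref{extension}(b'), so the first equation at $\rho$ collapses to $h_\nu(h_\mu(\theta)+\rho)-\theta+\rho=0$. The strict bound $0<h_\mu'(\theta)h_\nu'(h_\mu(\theta)+\rho)<1$ is precisely the Julia--Carath\'eodory output of the paragraph preceding the theorem.

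For the reverse implication, set $\eta:=h_\mu(\theta)+\rho$ and assume the three right-hand-side conditions. I would apply the implicit function theorem to
\begin{equation*}
F(w_1,w_2,z):=\bigl(w_1-h_\nu(w_2)-z,\ w_2-h_\mu(w_1)-z\bigr)
\end{equation*}
at $(\theta,\eta,\rho)$: $F(\theta,\eta,\rho)=0$ holds by the functional equation, and the $(w_1,w_2)$-Jacobian has determinant $1-h_\mu'(\theta)h_\nu'(\eta)>0$ by the derivative condition. This yields a local analytic solution $z\mapsto(\tilde\omega_1(z),\tilde\omega_2(z))$ on a complex neighborhood $V$ of $\rho$ with $\tilde\omega_1(\rho)=\theta$ and $\tilde\omega_2(\rho)=\eta$. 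Differentiating the system gives $\tilde\omega_1'(\rho)=(1+h_\nu'(\eta))/(1-h_\mu'(\theta)h_\nu'(\eta))>0$, so, shrinking $V$ if necessary, $\tilde\omega_j(V\cap\mathbb{C}^+)\subseteq\mathbb{C}^+$.

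The heart of the argument is matching $\tilde\omega_j$ with the Biane subordination maps. Eliminating $\tilde\omega_2=h_\mu(\tilde\omega_1)+z$ turns the system into the fixed-point equation $\tilde\omega_1(z)=f_z(\tilde\omega_1(z))$, and for $z\in V\cap\mathbb{C}^+$ one has $|f_z'(\tilde\omega_1(z))|=|h_\mu'(\tilde\omega_1(z))h_\nu'(\tilde\omega_2(z))|<1$ by continuity. Since neither $\mu$ nor $\nu$ is a point mass, $f_z$ is not a M\"obius transformation, so case~1 of the Denjoy--Wolff theorem forces uniqueness of such an attractive interior fixed point. As $\omega_1(z)$ is known (by~\cite{BelBer07}) to be one, this gives $\omega_1(z)=\tilde\omega_1(z)$ throughout $V\cap\mathbb{C}^+$; letting $z\to\rho$ and invoking Lemma~\ref{extension}(a) yields $\omega_1(\rho)=\theta$. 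Finally, since $\omega_1$ extends analytically and real-valued across $\rho$, the identity $G_{\mu\boxplus\nu}=G_\mu\circ\omega_1$, together with analyticity of $G_\mu$ at $\theta\notin\mathrm{supp}(\mu)$, shows $G_{\mu\boxplus\nu}$ is real-analytic near $\rho$, so Stieltjes inversion gives $\rho\notin\mathrm{supp}(\mu\boxplus\nu)$. The main obstacle is this identification step: a priori many local solutions to the implicit system might coexist, and pinning $\tilde\omega_1$ down as the Biane subordination map specifically requires the Denjoy--Wolff characterization rather than merely solving the equations.
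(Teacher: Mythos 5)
Your proof is correct and follows essentially the same route as the paper: taking boundary limits in the subordination system for the forward direction, and combining the implicit function theorem with the Denjoy--Wolff uniqueness (applied for $z\in V\cap\mathbb C^+$, then letting $z\to\rho$) for the converse. You spell out two points the paper leaves terse -- the identification $\omega_1=\tilde\omega_1$ on $V\cap\mathbb C^+$ via the interior-fixed-point case of Denjoy--Wolff, and the final Stieltjes-inversion step showing $\rho\notin\mathrm{supp}(\mu\boxplus\nu)$ -- both of which are worth making explicit.
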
 

{\bf Proof:} Let $\rho \in \mathbb{R}\setminus \mathrm{supp}(\mu \boxplus \nu )$ 
be a solution of the equation $\omega_1 (\rho )=\theta $. Taking the limit $z\rightarrow \rho $ 
in the second equality of the system above, we obtain 
$$h_\mu(\theta)+\rho=\omega_2(\rho)$$ 
which belongs to $\{\infty\}\cup\mathbb{R}\setminus \mathrm{supp}(\nu )$ by Lemma \ref{extension} (b'), 
and therefore to the domain of analyticity of $h_\nu$. 
Taking the limit $z\rightarrow \rho $ in \eqref{subordination3}, one gets 
$$h_\nu(h_\mu(\theta)+\rho)=h_\nu(\omega_2(\rho))=\omega_1(\rho)-\rho =\theta -\rho.$$ 
Since $\rho \in \mathbb{R}\setminus \mathrm{supp}(\mu \boxplus \nu )$, 
$\omega_1$ is analytic at $\rho $ (see Lemma \ref{extension} (d) - by
hypothesis $\omega_1(\rho)=\theta\neq\infty$), 
and there must be no obstacle to the existence of an analytic solution around $\rho $ as in the discussion above. 
Hence $$0<h_\mu'(\theta)h_\nu'(h_\mu(\theta )+\rho)<1.$$
Conversely, let $\rho \in \mathbb{R}$ be such that 
$h_\mu(\theta)+\rho$ belongs to the domain of analyticity of $h_\nu$,
and $\rho$ is a solution of \eqref{condition}. 
This in particular implies that $\theta$ is the Denjoy-Wolff point
of $f_\rho(\omega)=h_\nu(h_\mu(\omega)+\rho)+\rho$, $\omega\in
\mathbb C^+$,
as $f_\rho(\theta)=\theta$ and $f_\rho'(\theta)\in ]0,1[$.
An application of the implicit function theorem shows that the
dependence of $\theta$ on the complex variable $\rho$ is analytic
around the given real point, and has a positive derivative.
Analytic continuation, the uniqueness of the Denjoy-Wolff point 
and the considerations above guarantee that $\omega_1(\rho)=\theta$.
$\Box$\\

\begin{remarque}
We should note that the set of points $\rho$ that satisfy the equality from 
\eqref{condition}, being the set of zeroes of an analytic map, is necessarily discrete, 
by the principle of isolated zeroes. Therefore the set of solutions of $\omega_1(\rho )=\theta $ 
in $\mathbb{R}\setminus \mathrm{supp}(\mu \boxplus \nu )$ is discrete as well. 
This set of solutions may be empty, finite or countably infinite, as illustrated in the next remarks.
\end{remarque}

\begin{remarque}\label{infinidiv}
If $\nu $ is $\boxplus $-infinitely divisible, it is known (see \cite{Biane97b}) 
that $\omega _1$ is a conformal bijection between $\mathbb{C}^+$ 
and a simply connected domain $\Omega \subseteq \mathbb{C}^+$, whose inverse 
is the restriction to $\Omega $ of the map $H$ defined on $\mathbb{C}^+$ by:
$$H(z):=z+R_\nu(G_\mu(z)),$$
where $R_\nu$ is the R-tranform of the measure $\nu$. Under this extra hypothesis, 
the content of Theorem \ref{outlier0} reads simply: 
$\rho \in \mathbb{R}$ is a solution of the equation $$\omega_1 (\rho )=\theta $$ 
belonging to $\mathbb{R}\setminus \mathrm{supp}(\mu \boxplus \nu )$ 
if and only if 
\begin{equation}\label{conditionID}
\rho=H(\theta), \quad H'(\theta ) > 0.
\end{equation}
Indeed, if $\rho \in \mathbb{R}$ satisfy \eqref{conditionID}, 
notice first that $$h_\mu(\theta)+\rho=F_\mu(\theta)+R_\nu(G_\mu(\theta))=G_\nu^{-1}(G_\mu(\theta)),$$
which is in the domain of analyticity of $h_\nu$ (recall that
$\theta\notin\mathrm{supp}(\mu)$ by hypothesis and thus
$G_\nu(h_\mu(\theta)+\rho)=G_\mu(\theta)$). 
Then $$h_\nu(h_\mu(\theta)+\rho)+\rho=F_\nu(G_\nu^{-1}(G_\mu(\theta)))-h_\mu(\theta)=F_\mu(\theta)-h_\mu(\theta)=\theta.$$
Finally, because of the relation $$H(\theta )=\theta-h_\nu(h_\mu(\theta)+\rho),$$ 
the condition $H'(\theta ) > 0$ implies the inequality in \eqref{condition}.\\
Conversely, assume that $\rho \in \mathbb{R}$ is such that 
$h_\mu(\theta)+\rho$ belongs to the domain of analyticity of $h_\nu$ and satisfies \eqref{condition}. 
The condition \eqref{conditionID} being equivalent to 
\begin{equation}
\theta=\rho -R_\nu(G_{\mu\boxplus\nu}(\rho)), \quad H'(\theta ) > 0,
\end{equation}
it is possible to check
that $\rho -R_\nu(G_{\mu\boxplus\nu}(\rho))$ is the unique Denjoy-Wolff 
point of $f_\rho$, which yields the equality above. The inequality follows immediately 
from the inequality in \eqref{condition}.
Note that there is at most one solution belonging to $\mathbb{R}\setminus \mathrm{supp}(\mu \boxplus \nu )$ which satisfies
the equation $$\omega_1 (\rho )=\theta $$ when $\nu $ is $\boxplus $-infinitely divisible.
\end{remarque}

\begin{exemple}
We should note that there are many examples in which the
restriction to the real line of subordination functions 
are many-to-one on their domain of analyticity. A
trivial example comes from free Brownian motions, in which the
spikes are associated to the matrix approximating the semicircular
distribution (note the difference from the problem studied
in \cite{CDFF10}!). 
Consider a Bernoulli distribution $b=(\delta_{-1}+\delta_1)/2$
and a standard semicircular $\gamma_t$ of variance $t$. We know that 
$G_{b\boxplus\gamma_t}(z)=G_{\gamma_t}(\omega_1(z))=G_b(\omega_2(z))$,
where $\omega_2$ satisfies 
$$
\omega_2(z)=z-tG_b(\omega_2(z)),\quad z\in\mathbb C^+\cup\mathbb R.
$$
We claim that for $t>0$ small enough, there are real values
taken twice by $\omega_1$. Indeed,
$$
h_b(h_{\gamma_t}(\theta)+\rho)=\frac{-1}{h_{\gamma_t}(\theta)+\rho}=
\frac{-2}{-\theta+\sqrt{\theta^2-4t}+2\rho},
$$
and then 
$$
h_b(h_{\gamma_t}(\theta)+\rho)=\theta-\rho\iff
2\rho^2+(\sqrt{\theta^2-4t}-3\theta)\rho+\theta^2-\theta\sqrt{\theta^2-4t}-2=0.
$$
The two solutions are
$$
\rho_{1,2}=\frac{3\theta-\sqrt{\theta^2-4t}\pm\sqrt{2\theta^2+2\theta
\sqrt{\theta^2-4t}+16-4t}}{4}.
$$
For these solutions to be real, we must have $|\theta|\ge2\sqrt{t}$ and
$$
\theta^2+8-2t\ge-\theta\sqrt{\theta^2-4t}.
$$
For our example we shall consider a positive spike, and so the requirements translate into $\theta\ge 2\sqrt{t}$ and $\theta^2
\ge2t-8$. For $t\le1$, this reduces to $\theta\ge2\sqrt{t}$.
On the other hand, the condition $h_{\gamma_t}'(\theta)h_b'
(h_{\gamma_t}(\theta)+\rho)<1$ means 
$$
\frac{h_{\gamma_t}'(\theta)}{(h_{\gamma_t}(\theta)+\rho)^2}<1.
$$
We recall from above that $\frac{-1}{h_{\gamma_t}(\theta)+\rho}=\theta
-\rho$. Thus this inequality is simply $h_{\gamma_t}'(\theta)(\rho-\theta)^2<1.$ 
We note that $h_{\gamma_t}'(\theta)=\frac{\theta}{2\sqrt{\theta^2-4t}}-\frac12\in]0,1[$ 
for all $\theta>3\sqrt{t}$. Moreover, $\lim_{\theta\to+\infty}\theta^2 
h_{\gamma_t}'(\theta)=2t$, and, again when $\theta\to+\infty$, the two
values of $\rho$ tend to infinity at a speed of the order of 
$\theta$ and at zero with a speed of order $\frac{2t}{\theta}$, respectively. 
Now we conclude easily: for $\theta$ large enough and 
$t<\frac{1}{2}$ (strict inequality!), both conditions in \eqref{condition} are satisfied for the existence of $\rho$,
and thus we obtain two spikes explicitely computed.
\end{exemple}

\begin{remarque}\label{pointmasses}
In the case, not covered by Theorem \ref{outlier0}, where $\mu$ or $\nu$ is a point mass, 
$\omega_1$ is equal either to $F_\nu$ or to a real translation. 
In the second case, the equation $$\omega_1 (\rho )=\theta $$ is trivial. In the first case, 
$F_\nu$ establishing continuously increasing bijections between each connected component 
of the complement of the support of $\nu $ and some subsets of $\mathbb{R}$, 
the set of solutions may be determined by real analysis: for example, 
if $\nu=\sum2^{-n}\delta_{1/n}$, then there are countably many open
intervals included in $]0,1[$ which are mapped bijectively onto $
\mathbb R$ by $F_\nu$.
\end{remarque}

In this paper we shall be concerned almost exclusively with $\omega_1$,
so for simplicity we shall adopt the
$$
\text{\bf Notation convention:}\quad\quad \omega_1=\omega.
$$

\section{Main results and sketch of proof}

\subsection{Main result and examples}

\begin{definition}
For each $j\in \{1, \ldots , J\}$, define $O_j$ 
the set of solutions in $\mathbb{R}\setminus \mathrm{supp}(\mu \boxplus \nu )$ 
of the equation
\begin{equation} \label{outlier}
\omega (\rho )=\theta _j,
\end{equation}
and $$O=\bigcup_{1\leq j\leq J}O_j.$$
\end{definition}

\noindent 
Recall that the sets $O_j$ defined above may be empty, finite, or countably infinite.

\begin{theoreme} \label{mainresult}
Denote by $\mathrm{sp}(X)$ the spectrum of the operator $X$.
The following results hold almost surely:
\begin{itemize}
\item  for each $\rho \in O_j$, for all  small enough $\varepsilon>0$, for all large enough $N$,
$$\mathrm{card}\{\mathrm{sp}(X_N)\bigcap ]\rho-\epsilon; \rho+\epsilon[\}=k_j;$$
\item for almost all $\eta>0$, for all small enough $\varepsilon > 0$, for large enough $N$, 
$$\mathrm{sp}(X_N)\bigcap{\mathbb{C}}\setminus K_{\eta }^{\mathbb{R}}\subset \bigcup_{\rho \in O\bigcap{\mathbb{C}}\setminus K_{\eta }^{\mathbb{R}}} ]\rho-\epsilon; \rho+\epsilon[.$$
\end{itemize}
\end{theoreme}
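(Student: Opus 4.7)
Fix $\tau\in\mathrm{supp}(\mu)$ and let $W_N$ be an $N\times r$ matrix whose orthonormal columns span the direct sum of the spike eigenspaces of $A_N$. Writing $A_N = A'_N + W_N(\Theta - \tau I_r)W_N^*$ with $\Theta = \mathrm{diag}(\theta_1 I_{k_1},\ldots,\theta_J I_{k_J})$ and $A'_N := A_N - W_N(\Theta - \tau I_r)W_N^*$, I get $A'_N W_N = \tau W_N$ and all eigenvalues of $A'_N$ at distance $o(1)$ from $\mathrm{supp}(\mu)$. Set $Y_N := A'_N + U_N^* B_N U_N$; by \cite{ColMal11} applied to the spike-free model $Y_N$, almost surely, for every $\eta>0$ and every $N$ large enough, $\mathrm{sp}(Y_N)\subseteq K_\eta^\mathbb{R}$. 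For such $z$, the standard rank-$r$ determinantal identity
\begin{equation*}
\det(zI - X_N) = \det(zI - Y_N)\cdot\det\bigl(I_r - (\Theta - \tau I_r) M_N(z)\bigr),\quad M_N(z):=W_N^*(zI-Y_N)^{-1}W_N,
\end{equation*}
shows that the eigenvalues of $X_N$ outside $K_\eta^\mathbb{R}$ coincide, with multiplicity, with the zeros of $D_N(z):=\det(I_r-(\Theta-\tau I_r)M_N(z))$ in that region.

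\textbf{Matricial subordination (main step).} The crux is to establish, almost surely and locally uniformly on compact subsets of $\mathbb{C}\setminus\mathrm{supp}(\mu\boxplus\nu)$, that
\begin{equation*}
M_N(z)\vers _{N\to\infty}(\omega(z)-\tau)^{-1}I_r.
\end{equation*}
I would combine two ingredients. First, an approximate matricial subordination equation for $\E[M_N(z)]$: using Haar integration by parts (equivalently, Weingarten calculus or a Stein-type identity for Haar measure), one derives an identity of the form $\E[M_N(z)]=(\omega_N(z)-\tau)^{-1}I_r+o(1)$, where $\omega_N$ is a finite-$N$ approximate subordination function satisfying a fixed-point equation that converges, by continuity and the uniqueness of the Denjoy-Wolff point discussed in Section \ref{freeconv}, to the true subordination function $\omega$. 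The key structural input here is the relation $A'_N W_N = \tau W_N$, which collapses $W_N^*(\omega_N(z)I-A'_N)^{-1}W_N$ to the scalar multiple $(\omega_N(z)-\tau)^{-1}I_r$. Second, concentration: the log-Sobolev inequality for the Haar measure on $U(N)$ yields Gaussian concentration of the Lipschitz entries of $M_N(z)$ around their means at rate $N^{-1/2}$, which together with a $\varepsilon$-net argument in $z$ and the elementary Lipschitz bound $\|\partial_z M_N(z)\|\le\|(zI-Y_N)^{-2}\|$ delivers the almost sure locally uniform convergence.

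\textbf{Conclusion and main obstacle.} The convergence above gives, almost surely and locally uniformly on $\mathbb{C}\setminus K$,
\begin{equation*}
D_N(z)\vers _{N\to\infty}D(z):=\prod_{j=1}^{J}\left(\frac{\omega(z)-\theta_j}{\omega(z)-\tau}\right)^{k_j}.
\end{equation*}
By Lemma \ref{extension} (b) the denominator does not vanish on $\mathbb{C}\setminus K$, so $D$ is analytic there; by Theorem \ref{outlier0}, its zero set in $\mathbb{R}\setminus K$ is exactly $O$, and each $\rho\in O_j$ is a zero of order precisely $k_j$, since the condition $0<h_\mu'(\theta_j)h_\nu'(h_\mu(\theta_j)+\rho)<1$ forces $\omega$ to be analytic at $\rho$ with $\omega'(\rho)\neq 0$, and the $\theta_j$ are pairwise distinct so the $J$ factors cannot interfere. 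Hurwitz's theorem applied on small complex disks around each $\rho\in O$ then yields the first bullet; the second bullet follows because $D$ is bounded away from zero on any compact subset of $(\mathbb{C}\setminus K_\eta^\mathbb{R})\setminus\bigcup_{\rho\in O}(\rho-\varepsilon,\rho+\varepsilon)$, the qualifier ``for almost all $\eta$'' serving only to exclude the countable set of $\eta$ for which some $\rho\in O$ satisfies $\mathrm{dist}(\rho,K)=\eta$. The principal difficulty lies in the matricial subordination step: one needs entrywise control of $M_N(z)$ with errors uniform in $z$ on compacts of $\mathbb{C}\setminus K$, which in turn requires an operator-norm resolvent bound $\|(zI-Y_N)^{-1}\|=O(1)$ uniform in $N$. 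This rests on a careful application of \cite{ColMal11} accommodating that the eigenvalues of $A'_N$ are only asymptotically close to $\mathrm{supp}(\mu)$, combined with identifying the finite-$N$ fixed point $\omega_N$ with $\omega$ in the limit via the stability of Denjoy-Wolff points.
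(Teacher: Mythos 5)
Your proposal is correct and follows essentially the same strategy as the paper: the spike-removing decomposition of $A_N$ with shift into $\mathrm{supp}(\mu)$, the Collins--Male inclusion for the spike-free model, the rank-$r$ determinantal reduction, the approximate matricial subordination derived from a Haar-invariance (Stein/integration-by-parts) identity combined with concentration and normal-families convergence, and the final Hurwitz argument. The only cosmetic differences are that the paper reduces to diagonal form and works with a projection $P$ rather than $W_N$, packages the bicommutant argument as a separate lemma to show the compressed expected resolvent is exactly scalar, and imposes a mild technical condition on the choice of $\alpha\in\mathrm{supp}(\mu)$ beyond merely lying in the support.
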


\begin{remarque}
The proof of this theorem, starting at the end of this section, 
and completed in the next one, works without any change 
when the spikes may depend on $N$, more precisely 
if we assume that the spectrum of $A_N$ consists of 
$N-r$ eigenvalues $\alpha _j^{(N)}$ satisfying 
$$\max _{1\leq j\leq N-r} \mathrm{dist}(\alpha _j^{(N)},\mathrm{supp}(\mu ))\vers _{N \rightarrow +\infty } 0,$$
and $r$ spikes $\theta _1^N \geq \ldots \geq \theta _r^N$ 
such that: $$\forall i\in \{k_1+\ldots+k_{j-1}+1,\ldots ,k_1+\ldots+k_j\},\ 
\theta _i^N \vers _{N \rightarrow +\infty } \theta_j.$$
The only reason we chose not to write the proofs under these slightly more general hypotheses 
is to avoid confusion in the notations.
\end{remarque}

\begin{remarque}
Actually, the conclusion of the preceding theorem holds for a random matrix $X_N=A_N+\tilde{B}_N$, 
where $A_N$ and $\tilde{B}_N$ are independent random Hermitian matrices satisfying almost surely the assumptions 
given in the first section and in the preceding remark, 
under the extra assumption that the distribution of the random matrix $\tilde{B}_N$ 
is invariant by conjugation by any unitary matrix in $\mathbb{U}_N$. Note however that the quantities 
$J, k_1, \ldots , k_J, \theta_1, \ldots , \theta _J$ are supposed deterministic. 
In particular, one recovers results from \cite{BGRao09}, \cite{CDF09} and \cite{CDFF10}. 
It is clear that a  random Hermitian matrix $\tilde{B}_N$ whose distribution 
is invariant by conjugation by any unitary matrix in $\mathbb{U}_N$ 
has the same distribution as $U_N^*B_NU_N,$
where $B_N$ is a random real diagonal matrix, 
and $U_N$ is a random unitary matrix distributed according to Haar measure 
and independent of $B_N$. 
The proof would then proceed, on the almost sure event on which $A_N$ and $\tilde{B}_N$ 
satisfy all the assumptions required, by conditioning with respect to the sigma-field generated 
by the sequences of random matrices $A_N, B_N$, and then applying Theorem \ref{mainresult}. 
\end{remarque}

\begin{exemple}
When the rank of $A_N$ remains finite, one recovers, using the preceding remark, 
results on the extremal eigenvalues from \cite{BGRao09} recalled in Theorem \ref{BGRao} 
(see also the Gaussian case in \cite{CDF09}). 
Indeed, in this setting, $\mu\equiv \delta_0$, and the set $O$ is therefore 
the set of solutions $\rho \in \mathbb{R}\setminus \mathrm{supp}(\nu )$ 
of equations $$F_\nu(\rho )=\theta _j,$$
as explained in Remark \ref{pointmasses}. Theorem \ref{mainresult} 
implies the conclusion of Theorem \ref{BGRao}, the condition 
$$\gamma_i > 1/\lim_{z\downarrow b} G_\nu( z)$$
$$(\text{resp. }\ \gamma_{r-j} < 1/\lim_{z\uparrow a} G_\nu( z))$$
being equivalent to the existence of elements of $O$ 
greater (resp. lower) than the maximum (resp. minimum) of the support of $\nu$, 
and the limiting points $G_{\nu}^{-1}(1/\gamma_i)$, (resp. $G_{\nu}^{-1}(1/\gamma_{r-j})$) 
being solutions of $F_\nu(\rho )=\gamma_i$, (resp. $F_\nu(\rho )=\gamma_{r-j}$).
Notice that Theorem \ref{mainresult} deals in addition with the outliers of $X_N$ 
located in bounded components of the complement of the support of $\nu $.
\end{exemple}

\begin{exemple}
When $B_N$ is drawn from the GUE($N$,$\sigma ^2$), 
which satisfies the assumptions stated in the preceding remark 
(invariance by unitary conjugation, almost sure weak convergence of the spectral measure 
towards the semicircular distribution \cite{Arnold}, almost sure convergence to $0$ of 
the distance between the eigenvalues and the semicircular support \cite{BaiYin}), 
one recovers a particular case of the results on the outliers from \cite{CDFF10}. 
Indeed, in this setting, $\nu$ is the semicircular distribution, 
which is $\boxplus $-infinitely divisible, and the set $O$ is therefore 
described by Remark \ref{infinidiv}. The conclusion of Theorem \ref{mainresult} 
is then exactly the one of the main result of \cite{CDFF10}. 

Analogously, when $B_N$ is a $N\times N$ Wishart matrix, 
which also satisfies the assumptions stated in the preceding remark 
(invariance by unitary conjugation, almost sure weak convergence of the spectral measure 
towards the Marchenko-Pastur distribution, almost sure convergence to $0$ of 
the distance between the eigenvalues and the Marchenko-Pastur support \cite{Y}), 
one recovers the result on the outliers established by the two last authors of this paper and presented in \cite{MAXIME}. 
In this setting, $\nu$ is the Marchenko-Pastur distribution, 
which is $\boxplus $-infinitely divisible, and the set $O$ is therefore 
described by Remark \ref{infinidiv}. The conclusion of Theorem \ref{mainresult} 
is then exactly the one of the main result of chapter 7 of \cite{MAXIME}. 
\end{exemple}

\subsection{Reduction of the problem to the almost sure convergence of a $r\times r$ matrix}

In this section, we explain how we reduce the problem of locating outliers of $X_N$ 
to a problem of convergence of a certain $r\times r$ matrix in the spirit of \cite{BGRao09}.
Due to the invariance of the Haar measure under multiplication by any unitary matrix, 
we may assume without loss of generality that both $A_N$ and $B_N$ are real diagonal matrices: 
$$A_N=\displaystyle{\text{ Diag}(\underbrace{\theta _1, \ldots , \theta _1}_{k_1}, \ldots , 
\underbrace{\theta _J, \ldots , \theta _J}_{k_J}, \alpha _1^{(N)}, \ldots , \alpha _{N-r}^{(N)})},$$
$$B_N=\text{ Diag}(\beta _1^{(N)}, \ldots , \beta _N^{(N)}).$$
Moreover, from the beginning of our argument, we will make use of the following additive decomposition of $A_N$: 
$$A_N=A_N'+A_N'',$$
$$A_N'=\text{ Diag}(\alpha , \ldots , \alpha , \alpha _1^{(N)}, \ldots , \alpha _{N-r}^{(N)}),$$
$$A_N''=\displaystyle{\text{ Diag}(\underbrace{\theta _1-\alpha , \ldots , \theta _1-\alpha }_{k_1}, \ldots , 
\underbrace{\theta _J-\alpha , \ldots , \theta _J-\alpha }_{k_J}, 0, \ldots , 0)},$$
where the choice of $\alpha \in \mathrm{supp}(\mu )$ is made so that 
$\lim_{y\downarrow0}G_\mu(iy+\alpha)\in\mathbb R+i[-\infty,0)$. 

\noindent
Note that $A_N''={}^tP\Theta P$, where $P$ is the $r \times N$ matrix defined by
$$P=(I_r \vert 0_{r\times (N-r)}),$$
$\Theta$ is the $r\times r$ matrix 
$$\Theta =\displaystyle{\text{ Diag}(\underbrace{\theta _1-\alpha , \ldots , \theta _1-\alpha }_{k_1}, \ldots , 
\underbrace{\theta _J-\alpha , \ldots , \theta _J-\alpha }_{k_J})},$$
and ${}^tX$ denotes the transpose of the matrix $X$.
Under our assumptions, the spectral measure of $A_N'$ (resp. $B_N$) weakly converges to $\mu $ (resp. $\nu $), 
and all eigenvalues of $A_N'$ (resp. $B_N$) belong to any given neighborhood of $\mathrm{supp}(\mu )$ 
(resp. $\mathrm{supp}(\nu )$) for large enough $N\in \mathbb{N}$. Hence, applying Corollary 3.1 of \cite{ColMal11}, 
one gets that, for any $k\in \mathbb{N}^*$, almost surely, 
$$\exists N_k\in \mathbb{N}, \forall N\geq N_k, 
\mathrm{sp}(A_N'+U_N^*B_NU_N)\subseteq K_{\frac{1}{k}}^\mathbb{R}.$$ 
We obtain thus the almost sure existence of a sequence $(\eta _N)_{N\geq N_1}$ 
of positive numbers converging to $0$ so that: $$\forall N\geq N_1, 
\mathrm{sp}(A_N'+U_N^*B_NU_N)\subseteq K_{\eta _N}^\mathbb{R}.$$
(Choose for instance $\eta _N=\frac{1}{k}$, for $N_k\leq N < N_{k+1}$.)
In the following, we will restrict ourselves to the almost sure event on which 
the sequence $(\eta _N)_{N\geq N_1}$ is well-defined. Then, 
for $N\geq N_1$, for any $\lambda \in \mathbb{C}\setminus K_{\eta _N}^\mathbb{R}$, 
the matrix $\lambda I_N-(A_N'+U_N^*B_NU_N)$ is invertible and 
$$\det (\lambda I_N-X_N)=\det (\lambda I_N-(A_N'+U_N^*B_NU_N))\det (I_N-R_N(\lambda ){}^tP\Theta P),$$
where 
\begin{equation}\label{RN}
R_N(\lambda )=\left(\lambda I_N-(A_N'+U_N^*B_NU_N)\right)^{-1}.
\end{equation}
Using that, for rectangular matrices $X\in M_{N,r}(\mathbb{C}), Y\in M_{r,N}(\mathbb{C})$, 
one has $\det (I_N-XY)=\det (I_r-YX)$, one obtains: 
$$\det (\lambda I_N-X_N))=\det (\lambda I_N-(A_N'+U_N^*B_NU_N))\det (I_r-PR_N(\lambda ){}^tP\Theta ).$$
Hence, for $N\geq N_1$, the eigenvalues of $X_N$ outside $K_{\eta _N}^\mathbb{R}$ 
are precisely the zeros of $\det (I_r-PR_N(\lambda ){}^tP\Theta )$ in that open set. 
We will denote by 
\begin{equation}\label{MN}
M_N:=I_r-PR_N{}^tP\Theta 
\end{equation}
the analytic function defined on $\mathbb{C}\setminus K_{\eta _N}^\mathbb{R}$, 
with values in the set of $r\times r$ complex matrices.\\

Then, the following fundamental lemma allows to reduce the problem to  the convergence of the sequence of analytic functions $(M_N)_{N\geq N_1}$. 

\

\begin{lemme}\label{alt-Benaych-Rao}
Let $M\colon\overline{\mathbb{C}}\setminus K\to M_r(\mathbb C)$
be a normal-operator-valued analytic function 
(i.e. $M(z)\in M_r(\mathbb C)$ is normal for each $z\in\overline{\mathbb{C}}\setminus K$) so that 
\begin{enumerate}
\item[(a)]  $\forall z\in \mathbb{C}\setminus K, M(z)^*=M(\overline{z})$. 
\item[(b)] $\Im z>0\implies \Im M(z)$ invertible. 
\end{enumerate}
Assume that there exists a sequence of positive numbers $\{\eta_N\}_{N\in\mathbb N}$ 
decreasing to zero and a sequence of analytic maps
$M_N\colon\overline{\mathbb{C}}\setminus K_{\eta_N}^{\mathbb{R}}\to M_r(\mathbb{C})$ so that
\begin{enumerate}
\item there exists $C>0$ such that for all $ z \in \mathbb{C} $ such that $\vert z \vert >C$, for any $N$, $M_N(z)$ is invertible,
\item for any $z\in\mathbb{C}\setminus\mathbb{R}$ we have $M_N(z)\in GL_r(\mathbb{C})$.
\item for any $\eta >0$, $M_N$ converges to $M$, 
uniformly on $\overline{\mathbb{C}}\setminus K_{\eta }^{\mathbb{C}}$;
\end{enumerate}
If, for a fixed $\eta >0$ such that the boundary points of $K_\eta^{\mathbb{R}}$ are not zeroes of $\det(M)$, $\{\rho_1,\dots,\rho_{p(\eta)}\}$ is the set 
of points $z\in\overline{\mathbb{C}}\setminus K^\mathbb R_\eta$ such 
that $M(z)$ is not invertible, then 
\begin{enumerate}
\item[(i)] $\{\rho_1,\dots,\rho_{p(\eta)}\}\subset\mathbb{R}$;
\item[(ii)] $\mathrm{dim}(\mathrm{Ker}(M(\rho_j))$ equals the order of zero of 
$\det(M(\rho_j))$;
\item[(iii)] For any $0 < \varepsilon < \frac{1}{2}\min\{|\rho_i-\rho_j|,  ~d(\rho_i, K_\eta^{\mathbb{R}})\colon
1\leq i\neq j\leq
{p(\eta)}\}$, there exists an $N_0\in\mathbb{N}$
so that for any $N\geq N_0$ the function $\det(M_N)$ is defined on 
$\overline{\mathbb{C}}\setminus K_{\eta }^{\mathbb{R}}$,  has
exactly $\mathrm{dim}(\mathrm{Ker}(M(\rho_j)))$ zeroes in $ (\rho_j-\varepsilon,\rho_j+\varepsilon)$ for any $j\in \{1,\ldots,\rho_{p(\eta)}\}$
and exactly 
 $\mathrm{dim}(\mathrm{Ker}(M(\rho_1)))+\cdots+\mathrm{dim}(\mathrm{Ker}(M(\rho_{p(\eta)})))$
zeros in $\overline{\mathbb{C}}\setminus K_{\eta }^{\mathbb{R}}$, counted
with multiplicity, so that  $$\{z\in\overline{\mathbb{C}}\setminus K_{\eta }^{\mathbb{R}}\colon \det(M_N(z))=0\}
\subset\bigcup_{j=1}^{p(\eta)}(\rho_j-\varepsilon,\rho_j+\varepsilon).$$
\end{enumerate} 
\end{lemme}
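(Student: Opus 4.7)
The plan is to prove the three assertions in order, exploiting the matrix Herglotz (Nevanlinna) structure of $M$ together with a uniform argument-principle comparison with $M_N$.

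For (i), I would show that $M(z)$ is invertible on all of $\mathbb{C}\setminus\mathbb{R}$. If $z\in\mathbb{C}^+$ and $M(z)v=0$ with $v\neq 0$, then normality of $M(z)$ forces $\Re M(z)$ and $\Im M(z)=(M(z)-M(z)^*)/(2i)$ to commute, and hence share an orthonormal eigenbasis; any $v\in\mathrm{Ker}(M(z))$ lies in the $0$-eigenspace of both, contradicting invertibility of $\Im M(z)$. The case $\Im z<0$ reduces to this by (a). Combined with condition (1) and the uniform convergence (3), which rule out $\infty$ being a non-invertibility point of $M$, this places the $\rho_j$'s in $\mathbb{R}$. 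The very same argument applied to $M_N$, using hypothesis (2), shows that all zeros of $\det M_N$ are real as well; we shall use this in step (iii).

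For (ii), note first that, by (a), $M$ is real-analytic and self-adjoint on each component of $\mathbb{R}\setminus K$, so $M(\rho_j)$ is Hermitian and $\dim\mathrm{Ker}(M(\rho_j))$ equals the number of zero eigenvalues of $M(\rho_j)$. By Rellich's theorem one can choose a real-analytic labeling $(\lambda_i(x),v_i(x))_{i=1}^r$ of eigenvalues and unit eigenvectors in a neighborhood of $\rho_j$. A direct computation from $M(x)v_i(x)=\lambda_i(x)v_i(x)$, using self-adjointness and unit normalization, gives $\lambda_i'(x)=\langle M'(x)v_i(x),v_i(x)\rangle$. The crucial step is $M'(\rho_j)>0$: for each unit vector $v$, the scalar $q_v(z):=\langle \Im M(z)v,v\rangle$ is harmonic on $\mathbb{C}^+$, strictly positive there by (b), and vanishes on the real interval through $\rho_j$; Hopf's lemma (equivalently, the Cauchy--Riemann equations for the holomorphic $\langle M(z)v,v\rangle$) yields $\partial_y q_v(\rho_j)>0$, and this equals $\langle M'(\rho_j)v,v\rangle$. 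Consequently $\lambda_i'(\rho_j)>0$ for every $i$, so every eigenvalue of $M$ that vanishes at $\rho_j$ does so to exactly first order, and the order of the zero of $\det M=\prod_i\lambda_i$ at $\rho_j$ equals the number of vanishing eigenvalues, namely $\dim\mathrm{Ker}(M(\rho_j))$.

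For (iii), I apply the scalar argument principle to $\det M_N$ and $\det M$. Pick $\varepsilon$ as in the statement so that the disks $D(\rho_j,\varepsilon)$ are pairwise disjoint and disjoint from $K_\eta^{\mathbb{C}}$. On each circle $\Gamma_j:=\partial D(\rho_j,\varepsilon)$, $\det M$ is nonzero hence bounded below in modulus; by hypothesis (3), $\det M_N\to\det M$ uniformly on $\Gamma_j$, so $\det M_N$ is nonzero there for $N$ large and the logarithmic derivatives converge uniformly. Hence
$$\frac{1}{2\pi i}\int_{\Gamma_j}\frac{(\det M_N)'(z)}{\det M_N(z)}\,dz \;\longrightarrow\; \frac{1}{2\pi i}\int_{\Gamma_j}\frac{(\det M)'(z)}{\det M(z)}\,dz \;=\; \dim\mathrm{Ker}(M(\rho_j)),$$
the last equality being exactly (ii), and integer-valuedness of both sides forces equality for $N$ large. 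Since all zeros of $\det M_N$ are real (step (i) applied to $M_N$), these zeros lie in the interval $(\rho_j-\varepsilon,\rho_j+\varepsilon)$. For the global count in $\overline{\mathbb{C}}\setminus K_\eta^{\mathbb{R}}$, integrate along a contour formed by a large circle $\{|z|=R\}$ with $R>C$ and a curve hugging $\partial K_\eta^{\mathbb{R}}$ from outside: condition (1) prevents zeros of $\det M_N$ in $\{|z|>R\}$; the assumption that the boundary of $K_\eta^{\mathbb{R}}$ contains no zero of $\det M$, combined with (3), prevents zeros of $\det M_N$ on the inner curve for $N$ large; the argument-principle count on the enclosed region converges to $\sum_j\dim\mathrm{Ker}(M(\rho_j))$, and integer-valuedness again gives equality, localizing every zero of $\det M_N$ to $\bigcup_j(\rho_j-\varepsilon,\rho_j+\varepsilon)$. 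The main obstacle is the eigenvalue-simplicity step in (ii): a generic holomorphic matrix-valued function can have eigenvalue branches with higher-order tangencies or Puiseux-type branching, and it is exactly the Herglotz structure on $\mathbb{C}^+$ together with Hopf's lemma on $\Im M$ that rules this out; everything else is a routine argument-principle bookkeeping, using that $\eta_N\downarrow 0$ ensures $M_N$ is defined throughout $\overline{\mathbb{C}}\setminus K_\eta^{\mathbb{R}}$ for $N$ large.
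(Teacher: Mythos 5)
Your treatment of (i) and (iii) is sound and essentially matches the paper's: (i) follows because for a normal matrix $\ker M(z)=\ker M(z)^*$, so $M(z)v=0$ forces $\Im M(z)\,v=0$, contradicting (b); and your argument-principle/Rouch\'e bookkeeping in (iii) is just an unwound version of the Hurwitz-theorem argument the paper uses. You are also right that (ii) is the crux.

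But (ii) contains a genuine gap. You assert that $q_v(z)=\langle \Im M(z)v,v\rangle$ is ``strictly positive on $\mathbb C^+$ by (b).'' Hypothesis (b) only says $\Im M(z)$ is \emph{invertible}; it does not say $\Im M(z)>0$. In fact, in the very application the lemma is built for, $M(z)=I_r-\chi(z)\Theta$ with $\Theta=\mathrm{diag}(\theta_j-\alpha)$, so $\Im M(z)=|\Im\chi(z)|\,\Theta$ on $\mathbb C^+$; if some $\theta_j>\alpha$ and others $<\alpha$, then $\Im M(z)$ is indefinite, and $q_v$ is negative or even identically zero for suitable $v$. Your Hopf-lemma step therefore does not get off the ground: you need $q_v$ to be sign-definite (or at least nonvanishing) on $\mathbb C^+$ near $\rho_j$, and for the Rellich eigenvector $v=v_i(\rho_j)$, which is only an eigenvector of $M$ at the single point $\rho_j$, you have no control over $\langle\Im M(z)v,v\rangle$ away from the real axis. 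The paper avoids this entirely by invoking the Globevnik--Vidav theorem: a normal-operator-valued analytic function on a connected domain has commuting values, so there is a \emph{single} unitary $U$ with $U^*M(z)U=\mathrm{diag}(h_1(z),\dots,h_r(z))$ for all $z$. Each scalar $h_j$ then has $\Im h_j\neq0$ off $\mathbb R$ by (b), hence $h_j$ or $-h_j$ is a Pick function, and Julia--Carath\'eodory gives $h_j'(x)\neq0$ at real points of analyticity — no sign assumption needed. If you want to keep a Rellich/Hopf flavor, you must first establish the simultaneous diagonalization so that $v$ is a common eigenvector of $M(z)$ for \emph{all} $z$ (making $q_v=\Im h_i$, which is indeed nonvanishing and of constant sign on $\mathbb C^+$); but at that point you have reproduced the paper's mechanism.
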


\noindent{\bf Proof:} To begin with, by \cite{GloVid73}, 
$M(z)M(z')=M(z')M(z)$ for all $z, z'\in \overline{\mathbb{C}}\setminus K$.
Thus, there exists a unitary matrix $U$ so that for all $z\in \overline{\mathbb{C}}\setminus K$
$$U^*M(z)U=\mathrm{diag}(h_1(z), \dots , h_r(z)).$$
 Pick an $x\in\mathbb{R}\setminus K$ so that $\det M(x)=0$. 
The condition $M(z)^*=M(\overline{z})$ implies that $M(x)^*=M(x)$ whenever $x\in\mathbb R$, and thus $h_i(x)\in \mathbb{R}$ for any $i\in \{1,\ldots,r\}$. 
Let $I=\{i_1,\ldots, i_j\}$ be such that $h_{l}(x)=0$ if $l\in I$ and $h_l(x)\neq 0$ else.
Denote by $m_k$ the multiplicity of the zero of $h_{i_k}(z)$ at $x$. 
The zero of $\det(M(x))$ is of order equal to $m_1+m_2+\cdots +m_j$.
We only need now to argue that $m_k=1$ for all $1\le k\le j$.
According to (b),  $\Im h_j(z)\neq 0$ whenever $\Im z\neq 0$. This, in particular,
implies that $\Im z\Im h_j(z)$ has constant sign on half-planes, and
by the Julia-Carath\'{e}odory Theorem, $h_j'(x)\neq0$ for any
$x\in\mathbb{R}\setminus K$. This proves (i) and (ii).\\
If we denote by $f(z)=\det(M(z))$ and $f_N(z)=\det(M_N(z)),$ 
then Hurwitz's Theorem \cite[Kapitel 8.5]{Remmert84} guarantees that for $N$ large enough,
$f_N$ will have exactly as many zeros - multiplicity included - as
$f$ has in $\overline{\mathbb{C}}\setminus K_{\eta }^{\mathbb{C}}$ - 
and since all zeros of $f_N$ are known to be real, in 
$\overline{\mathbb{C}}\setminus K_{\eta }^{\mathbb{R}}$ -
and these zeros will cluster towards $\{\rho_1,\dots,\rho_{p(\eta)}\}$
in the sense that for any given $\varepsilon>0$ there exists an 
$N_\varepsilon\in\mathbb N$ so that 
$$\{z\in\overline{\mathbb{C}}\setminus K_{\eta }^{\mathbb{R}}\colon \det(M_N(z))=0\}
\subset \bigcup_{j=1}^{p(\eta)}B(\rho_j,\varepsilon)$$
whenever $N\ge N_\varepsilon$. Moreover, for $\varepsilon>0$ small 
enough, there are exactly $\mathrm{dim}(\mathrm{Ker}(M(\rho_j))$ zeros of $f_N$
in $B(\rho_j,\varepsilon)$, multiplicity included.
Since by 2., $M_N$ is invertible in the two half-planes, we must have 
$$\{z\in \overline{\mathbb{C}}\setminus K_{\eta }^{\mathbb{R}}\colon \det(M_N(z))=0\}
\subset\bigcup_{j=1}^{p(\eta)}(\rho_j-\varepsilon,\rho_j+\varepsilon).$$
$\Box$\\

\section{Convergence of $M_N$ defined by (\ref{MN})}

\subsection{Preliminary results on the resolvent $R_N$}

We begin this section by recording some facts on the resolvent $R_N$ defined by \eqref{RN}.
Recall that, if $X$ is a selfadjoint operator on a Hilbert space with spectrum $\sigma (X)$, 
then we shall denote by $R_X(z)=(z-X)^{-1}$ its resolvent. 
It is known that this resolvent is analytic on $\mathbb{C}\setminus \sigma (X)$. 
If $\varphi $ is a positive unital linear functional on the unital algebra generated 
by $X$, then the distribution $\mu _{X,\varphi }$ of $X$ with respect to $\varphi $ can be 
recovered as $$G_{\mu _{X,\varphi }}(z)=\varphi (R_X(z)),\quad z\notin \sigma (X).$$
Since in most cases it will be clear from the context which 
functional $\varphi $ is considered, we shall suppress $\varphi $
from the notation $\mu _{X,\varphi }$.\\
A more general notion of resolvent of $X$, which we shall use only
sparingly in this paper, can be defined, following Voiculescu,
as below: for an arbitrary operator $b$ on the same 
Hilbert space as $X$, we can write its decomposition
$$b=\underbrace{\frac{b+b^*}{2}}_{\Re b}+i\underbrace{\frac{b-b^*}{2i}}_{\Im b},$$
where $\Re b,\Im b$ are selfadjoint. We shall write $\Im b > 0$ if 
$\Im b\geq 0$ as operator on Hilbert space, and $(\Im b)^{-1}$ exists
and is bounded. It has been noted by Voiculescu \cite{Voiculescu00} that 
\begin{equation}\label{Voi}
R_X(b)=(b-X)^{-1}, \quad \Im b>0
\end{equation}
is an analytic map so that $\Im R_X(b)<0$. Moreover, as noted
in \cite[Remark 2.5]{BPV12}, if $E$ is a positive unit-preserving linear map
which leaves the algebra of $b$ invariant, then 
\begin{equation}\label{BPV}
\Im \left[E[R_X(b)]\right]^{-1}\geq\Im b.
\end{equation}

\

\noindent
For all $z\in \mathbb{C}\setminus \mathbb{R}$, $R_N(z)$ defined by \eqref{RN} satisfies: 
\begin{equation}\label{partieimaginaire}
\forall z\in \mathbb{C}\setminus \mathbb{R}, \Vert R_N(z)\Vert \leq \frac{1}{|\Im z|}.
\end{equation}
This implies that, for any $z\in \mathbb{C}\setminus \mathbb{R}$, 
the matrix $\mathbb{E}[R_N(z)]$ is well-defined and also satisfies: 
$$\Vert \mathbb{E}[R_N(z)]\Vert \leq \frac{1}{|\Im z|}.$$

\begin{lemme}\label{l}
When $\Im z\neq0$, $\mathbb{E}[R_N(z)]$ is an invertible matrix and 
\begin{equation} \label{borne} 
\forall z\in \mathbb{C}\setminus \mathbb{R},\  
\Vert \mathbb{E}[R_N(z)]^{-1}\Vert < |z|+C_1+\frac{4C_2}{|\Im z|}, 
\end{equation}
where $C_1$ is any constant greater than $\sup_N (\Vert A^{'}_N \Vert + \Vert B_N \Vert)$ 
and $C_2$ any constant greater than $\sup_N \left(\tr_N(B_N^2)-[\tr_N(B_N)]^2\right)$. 
\end{lemme}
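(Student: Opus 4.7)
I would begin by verifying that $M(z) := \mathbb{E}[R_N(z)]$ is invertible. This follows from the identity $\Im R_N(z) = -\Im z\cdot R_N(z)R_N(z)^*$, valid since $X_N = A_N' + U_N^*B_NU_N$ is Hermitian, which yields for $\Im z>0$ the almost sure bound $\Im R_N(z) \leq -\Im z/(|z|+C_1)^2\cdot I_N$; taking expectation preserves this strict negative-definiteness, so $M(z)$ is invertible.

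Next, I would invoke Voiculescu's inequality \eqref{BPV} with $X=X_N$, $b=zI_N$, and $E=\mathbb{E}$ (which is a positive, unital linear map into $M_N(\mathbb{C})$, trivially leaving the scalar subalgebra invariant). This yields the key estimate $\Im M(z)^{-1}\geq (\Im z)I_N$ for $\Im z>0$. Consequently the matrix-valued analytic function $\Phi(z) := M(z)^{-1} - zI_N + \mathbb{E}[X_N]$ maps $\mathbb{C}^+$ into the cone of matrices with nonnegative imaginary part; that is, it is a matrix-valued Nevanlinna function. A Neumann-series expansion of $R_N(z)$ at infinity, combined with the Haar-invariance identities $\mathbb{E}[X_N] = A_N' + \tr_N(B_N) I_N$ and $\mathbb{E}[X_N^2] - \mathbb{E}[X_N]^2 = (\tr_N(B_N^2) - \tr_N(B_N)^2) I_N$, shows that $\Phi(z) \to 0$ non-tangentially as $|z| \to \infty$.

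By the matrix-valued Herglotz/Nevanlinna representation, $\Phi(z) = \int_{\mathbb{R}} d\mu(t)/(t-z)$ for some positive matrix-valued finite Borel measure $\mu$, and matching the leading $1/z$ term of the expansion of $M(z)^{-1}$ identifies $\mu(\mathbb{R}) = (\tr_N(B_N^2) - \tr_N(B_N)^2) I_N$, so $\|\mu(\mathbb{R})\| < C_2$. The triangle inequality then yields $\|M(z)^{-1}\| \leq \|zI_N - \mathbb{E}[X_N]\| + \|\int d\mu(t)/(t-z)\|$; the first term is at most $|z| + \|A_N'\| + |\tr_N(B_N)| < |z| + C_1$, while for the second I would combine the scalar Cauchy estimate $|\int d\mu_v(t)/(t-z)| \leq \mu_v(\mathbb{R})/|\Im z|$ (with $\mu_v := \langle v, \mu(\cdot) v\rangle$) and the numerical-radius inequality $\|A\| \leq 2 w(A)$, which comfortably yields $\leq 4 C_2/|\Im z|$. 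The case $\Im z < 0$ follows by the conjugation symmetry $M(\bar z) = M(z)^*$.

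\textbf{Main obstacle.} The principal step is to establish the matrix-valued Nevanlinna integral representation for $\Phi$ and to correctly read off the total mass of $\mu$ from the large-$z$ asymptotic $M(z)^{-1} = zI_N - \mathbb{E}[X_N] - \mathrm{Cov}(X_N)/z + O(z^{-2})$; invertibility of $M$ and the final norm estimate are then essentially routine once this representation is in hand and \eqref{BPV} has been applied.
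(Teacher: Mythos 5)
Your proof is correct and follows essentially the same route as the paper's: invertibility from strict negativity of $\Im\,\mathbb{E}[R_N(z)]$, the key inequality $\Im\,\mathbb{E}[R_N(z)]^{-1}\geq \Im z\cdot I_N$ from \eqref{BPV}, the power-series expansion at infinity to identify the leading term of $\mathbb{E}[R_N(z)]^{-1}-(z-\mathbb{E}[X_N])$ with the scalar variance $\tr_N(B_N^2)-[\tr_N(B_N)]^2$, and then a Nevanlinna-type mass bound. The only cosmetic difference is in the last step: the paper compresses $f(z):=z-\mathbb{E}[X_N]-\mathbb{E}[R_N(z)]^{-1}$ through an arbitrary positive linear functional $\varphi$ to obtain a scalar Cauchy transform, then passes to the operator norm via Jordan decomposition (yielding the factor $4$), whereas you invoke the matrix-valued Herglotz representation of $\Phi=-f$, bound the diagonal matrix elements $\langle v,\Phi(z)v\rangle$ by $\mu_v(\mathbb R)/|\Im z|$, and pass to the norm via the numerical-radius inequality $\|A\|\leq 2w(A)$; these are two equivalent ways of organizing the same estimate (yours in fact gives the slightly sharper constant $2C_2$, comfortably within the claimed $4C_2$).
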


\begin{proof}
As noted in equation \eqref{Voi} above applied to $b=z-A_N'$,
$\Im (z-A_N'-U_N^*B_NU_N)^{-1}<0$. Since $\mathbb E$ is both positive
and faithful, it follows that for any $z\in\mathbb{C}^+$, 
$\Im\mathbb E\left[(z-A_N'-U_N^*B_NU_N)^{-1}\right]<0$, and thus by the same remark of Voiculescu \cite{Voiculescu00}, $\mathbb E\left[(z-A_N'-U_N^*B_NU_N)^{-1}\right]$ is invertible.
The second statement of the lemma is equivalent to a statement about the
power series expansion of $z\mapsto\mathbb{E}[R_N(z)]^{-1}$ around
infinity. The power series expansion 
$$\mathbb{E}[R_N(z)]=\sum_{n=0}^\infty\frac{\mathbb{E}[(A_N^{'}+U_N^*B_NU_N)^n]}{z^{n+1}},
\quad|z|>\|A_N^{'}\|+\|B_N\|,$$
assures us that
\begin{eqnarray*}
f(z) & := & (z-A_N^{'}-\mathbb{E}[U_N^*B_NU_N])-\mathbb{E}[R_N(z)]^{-1}\\
& = & z-\mathbb{E}[A_N^{'}+U_N^*B_NU_N]-\left[\sum_{n=0}^\infty\frac{\mathbb{E}[(A_N^{'}+U_N^*B_NU_N)^n]}{z^{n+1}}\right]^{-1}\\
& = & \left\{\left(z-\mathbb{E}[A_N^{'}+U_N^*B_NU_N]\right)\left[\sum_{n=0}^\infty\frac{\mathbb{E}[(A_N^{'}+U_N^*B_NU_N)^n]}{z^{n+1}}\right]-1\right\}\\
& & \mbox{}\times\left[\sum_{n=0}^\infty\frac{\mathbb{E}[(A_N^{'}+U_N^*B_NU_N)^n]}{z^{n+1}}\right]^{-1}\\
& = & \frac1z\left[\mathbb E[(A_N^{'}+U_N^*B_NU_N)^2]-
\mathbb E[A_N^{'}+U_N^*B_NU_N]^2+\frac1z\cdot O(1)\right]\\
& & \mbox{}\times\left[1+\sum_{n=1}^\infty\frac{\mathbb{E}[(A_N^{'}+U_N^*B_NU_N)^n]}{z^{n}}\right]^{-1}\\
& = & \frac1z\left[\mathbb E[(A_N^{'}+U_N^*B_NU_N)^2]-
\mathbb E[A_N^{'}+U_N^*B_NU_N]^2\right]+\frac1{z^2}O(1).
\end{eqnarray*}
This power series expansion holds uniformly in $N$ as long as
$\|A_N^{'}\|+\|B_N\|$ is bounded uniformly in $N$. In particular,
we obtain 
$$
\lim_{z\to\infty}zf(z)=\mathbb E[(A_N^{'}+U_N^*B_NU_N)^2]-
\mathbb E[A_N^{'}+U_N^*B_NU_N]^2=\tr_N((B-\tr_N(B))^2)\cdot1,
$$
uniform limit in $N$.

As noted in equation \eqref{BPV} above, $\Im f(z)<0$, so for any positive linear functional 
$\varphi$ on $M_N(\mathbb{C})$, the function $z\mapsto
\varphi(f(z))$ maps $\mathbb C^+$ into the lower half-plane, 
and $\lim_{z\to\infty}z\varphi(f(z))=\varphi(1)(\tr_N(B^2)-[\tr_N(B)]^2).$
Thus, $z\mapsto \varphi(f(z))$ is the Cauchy-Stieltjes transform
of a positive measure supported on $[-\|A_N^{'}\|-\|B_N\|,\|A_N^{'}\|+\|B_N\|]$
of total mass $\varphi(1)(\tr_N(B^2)-[\tr_N(B)]^2).$ It follows that
$$
|\varphi(f(z))|<\frac{\varphi(1)}{\Im z}(\tr_N(B^2)-[\tr_N(B)]^2),\quad z\in
\mathbb C^+.
$$
Now, since positive linear functionals on von Neumann algebras 
reach their norm on the unit, the Jordan decomposition of 
linear functionals allows us to write
$$
\|f(z)\|\leq\sup_{\|\varphi\|=1}|\varphi(f(z))|\leq4
\sup_{\varphi\ge0,\varphi(1)=1}|\varphi(f(z))|<\frac{4}{\Im z}
(\tr_N(B^2)-[\tr_N(B)]^2).
$$
Since $\mathbb E[R_N(z)]^{-1}=z-\mathbb E[A_N^{'}+U_N^*B_NU_N]-f(z)$, for
any $z\in\mathbb C\setminus\mathbb R$, we conclude that
$$
\|\mathbb E[R_N(z)]^{-1}\|\leq|z|+\|A_N^{'}\|+\|B_N\|+\|f(z)\|<
|z|+C_1+\frac{4C_2}{|\Im z|},
$$
as stated in our lemma.
\end{proof}

\

\noindent
One now states concentration results that will allow to reduce 
the proof of Proposition \ref{uniformconvergence} to the convergence 
of a sequence of deterministic matrices and to estimate the variance of each entry of the resolvant $R_N(z)$. 

\begin{lemme}\label{concentration}
\begin{description}
\item[(i)] $\forall z\in \mathbb{C}\setminus \mathbb{R}, PR_N(z)^t P- P\mathbb{E}[R_N(z)]^t P
\underset{N\rightarrow +\infty}{\overset{a.s.}{\longrightarrow }}0.$
\item[(ii)] $\forall z\in \mathbb{C}\setminus \mathbb{R}, \forall (k,l) \in \{1, \ldots , N\}^2, 
\mathbb{V}((R_N(z))_{kl})\leq \frac{C}{N|\Im z|^4}.$
\end{description}
\end{lemme}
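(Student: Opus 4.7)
The plan is to exploit the well-known concentration of measure phenomenon for Haar-distributed unitary matrices, which provides absolute constants $c_0, C_0 > 0$ such that any function $f \colon \mathbb{U}_N \to \mathbb{C}$ that is $L$-Lipschitz with respect to the Hilbert--Schmidt metric satisfies
$$
\mathbb{P}\bigl(|f(U_N) - \mathbb{E}[f(U_N)]| > t\bigr) \leq C_0 \exp\left(-\frac{c_0 N t^2}{L^2}\right), \quad t > 0,
$$
and in particular $\mathrm{Var}(f) \leq C L^2/N$. Such inequalities for the Haar measure on $\mathbb{U}_N$, going back to Gromov--Milman and refined by Meckes and others (see e.g.\ the treatment in Anderson--Guionnet--Zeitouni), will serve as the backbone of both (i) and (ii).

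The first step is to show that, for fixed $z \in \mathbb{C}\setminus\mathbb{R}$ and fixed $k, l$, the map $U \mapsto (R_N(z))_{kl}$ is Lipschitz on $\mathbb{U}_N$ with constant of order $1/|\Im z|^2$, uniformly in $k, l$ and $N$. Differentiating the identity $R_N(z) = (zI_N - A_N' - U^*B_N U)^{-1}$ along a tangent direction $X \in T_U\mathbb{U}_N$ yields
$$
dR_N(z)[X] = R_N(z)\bigl(X^*B_N U + U^*B_N X\bigr) R_N(z).
$$
Writing the $(k,l)$-entry as a Hilbert--Schmidt pairing between $X$ and a rank-one matrix of the form $B_N U R_N(z) e_l e_k^T R_N(z)$ (plus an analogous term), then using the basic estimates $\|R_N(z)\| \leq 1/|\Im z|$, $\|U\|=1$, and the uniform bound $\sup_N \|B_N\| < \infty$ (which follows from the compact support of $\nu$ together with the hypothesis that the eigenvalues of $B_N$ approach $\mathrm{supp}(\nu)$), gives $|d(R_N(z))_{kl}[X]| \leq 2\|B_N\| \|X\|_2 / |\Im z|^2$. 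Integration along geodesics on $\mathbb{U}_N$ then supplies the claimed Lipschitz bound.

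With this Lipschitz estimate in hand, part (ii) follows at once from the variance bound $\mathrm{Var}(f) \leq CL^2/N$ applied separately to the real and imaginary parts of $f(U) = (R_N(z))_{kl}$, giving $\mathbb{V}((R_N(z))_{kl}) \leq C/(N|\Im z|^4)$. For part (i), the key point is that $PR_N(z)^tP$ is an $r \times r$ matrix with $r$ fixed independent of $N$, so it suffices to prove entrywise almost sure convergence. The concentration inequality yields, for each $\varepsilon > 0$,
$$
\mathbb{P}\bigl(|(R_N(z))_{kl} - \mathbb{E}[(R_N(z))_{kl}]| > \varepsilon\bigr) \leq C_0 \exp\bigl(-c_0' N \varepsilon^2 |\Im z|^4\bigr),
$$
which is summable in $N$; the Borel--Cantelli lemma then gives the pointwise almost sure convergence of each entry, and hence of the entire matrix. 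The only place that truly requires care is the Lipschitz computation on the manifold $\mathbb{U}_N$, with the correct tracking of the $|\Im z|^{-2}$ factor; once that is in place, both statements reduce to standard applications of concentration on the unitary group.
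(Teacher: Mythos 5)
Your proof is correct and follows essentially the same route as the paper: establish that $U\mapsto (R_N(z))_{kl}$ is Lipschitz with constant $O(|\Im z|^{-2})$, apply the concentration inequality for Haar measure on $\mathbb{U}_N$ (the paper cites Corollary 4.4.28 of Anderson--Guionnet--Zeitouni), then conclude (i) by Borel--Cantelli and (ii) by integrating the tail bound. The only difference is cosmetic: you spell out the resolvent differential $dR_N(z)[X]=R_N(z)(X^*B_NU+U^*B_NX)R_N(z)$ to justify the Lipschitz constant, whereas the paper simply asserts it.
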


\noindent {\bf Proof.} 
Fix $z\in \mathbb{C}\setminus \mathbb{R}$. 
It is clear from the remark that writing $P(R_N(z)-\mathbb{E}[R_N(z)]){}^tP$ 
corresponds to taking the upper left $r\times r$ corner of $R_N(z)-\mathbb{E}[R_N(z)]$, 
that $(i)$ is equivalent to: 
\begin{equation} \label{concentration1} 
\forall (k,l) \in \{1, \ldots , r\}^2, 
\left(R_N(z)-\mathbb{E}[R_N(z)]\right)_{kl}\underset{N\rightarrow +\infty}{\overset{a.s.}{\longrightarrow }}0.
\end{equation}
Now, for any $(k,l) \in \{1, \ldots , N\}^2$, since the function $$f: U_N\mapsto R_N(z)_{kl}$$ 
is Lipschitz on the unitary group $\mathbb{U}_N$ with Lipschitz bound $\frac{C}{\vert \Im z \vert^2}$,
by Corollary 4.4.28 of the book \cite{AGZ10}, for any $0 < \alpha < \frac{1}{2}$,
$$\mathbb{P}\left(\vert\left(R_N(z)-\mathbb{E}[R_N(z)]\right)_{kl}\vert >  \frac{\epsilon}{N^{\frac{1}{2}-\alpha}}\right)\leq 2 \exp \left(-C N^{2\alpha} \vert \Im z \vert^4 \epsilon^2 \right).$$
Hence, one gets $\eqref{concentration1}$ by a standard application of Borel-Cantelli lemma, 
and $(ii)$ by the classical formula holding for a positive random variable $X$:  $$\mathbb{E}(X)=\int_0^{+\infty}\mathbb{P}(X>t)dt. \quad \Box $$

\subsection{Convergence of $M_N$} 
We investigate the convergence of the sequence of analytic functions $(M_N)_{N\geq N_1}$ defined by (\ref{MN}). 
We need the following preliminary lemma.

\begin{lemme} \label{analyticextension}
The function $\chi: z\mapsto \frac{1}{\omega (z)-\alpha }$ is analytic on 
$\mathbb{C}\setminus \mathrm{supp}(\mu \boxplus \nu )$ and satisfies 
$\chi(\overline{z})=\overline{\chi(z)}$ for any $z$ in $\mathbb{C}\setminus \mathrm{supp}(\mu \boxplus \nu )$.
\end{lemme}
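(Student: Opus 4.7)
The plan is to combine the mapping properties of $\omega=\omega_1$ collected in Lemma \ref{extension} with the choice $\alpha\in\mathrm{supp}(\mu)$ to show that $\omega(z)-\alpha$ never vanishes away from the poles of $\omega$, while at each pole of $\omega$ the quotient $\chi$ picks up a removable singularity. The reflection identity will then be a one-line consequence of part (c) of Lemma \ref{extension} together with $\alpha\in\mathbb{R}$.

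More concretely, I would first recall from Lemma \ref{extension}(d) that $\omega$ is meromorphic on $\mathbb{C}\setminus\mathrm{supp}(\mu\boxplus\nu)$, so $\chi$ is a priori meromorphic there, and the only task is to rule out poles, which correspond either to a zero of $\omega(z)-\alpha$ or to a pole of $\omega$ that is not cancelled. I would handle these regions one at a time. On $\mathbb{C}^+$, the subordination result recalled in Section \ref{freeconv} gives $\Im\omega(z)\geq\Im z>0$, so $\omega(z)\notin\mathbb{R}$ and in particular $\omega(z)\neq\alpha$; the case of $\mathbb{C}^-$ follows by Lemma \ref{extension}(c). For $x\in\mathbb{R}\setminus\mathrm{supp}(\mu\boxplus\nu)$ at which $\omega$ is finite, Lemma \ref{extension}(b) forces $\omega(x)\in\mathbb{R}\setminus\mathrm{supp}(\mu)$, so again $\omega(x)\neq\alpha$ because by hypothesis $\alpha\in\mathrm{supp}(\mu)$.

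It remains to deal with the points $x\in\mathbb{R}\setminus\mathrm{supp}(\mu\boxplus\nu)$ at which $\omega$ has a pole. By the analysis in the proof of Lemma \ref{extension}, such an $x$ is an isolated simple pole of $\omega$; hence $\omega(z)-\alpha\to\infty$ as $z\to x$, so $\chi(z)\to 0$, and Riemann's removable singularity theorem shows that $\chi$ extends analytically to $x$ with value $0$. Combining the three cases, $\chi$ is analytic on all of $\mathbb{C}\setminus\mathrm{supp}(\mu\boxplus\nu)$. Finally, the reflection property is immediate: for $z\in\mathbb{C}\setminus\mathbb{R}$, Lemma \ref{extension}(c) yields
\[
\chi(\overline{z})=\frac{1}{\omega(\overline{z})-\alpha}=\frac{1}{\overline{\omega(z)}-\alpha}=\overline{\left(\frac{1}{\omega(z)-\alpha}\right)}=\overline{\chi(z)},
\]
where we used $\alpha=\overline{\alpha}$, and this extends to real $z$ outside $\mathrm{supp}(\mu\boxplus\nu)$ by continuity.

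The only delicate point is the handling of the (possibly real) poles of $\omega$; all the rest is a bookkeeping application of Lemma \ref{extension}. The content of the lemma is really that the denominator $\omega(z)-\alpha$ inherits sign/location information from $\omega$ and never hits the one real value that lies in $\mathrm{supp}(\mu)$, which is precisely what was engineered when $\alpha$ was picked in $\mathrm{supp}(\mu)$.
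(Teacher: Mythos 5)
Your argument is correct and is precisely the content that the paper leaves unstated — the paper's "proof" is the single sentence "This lemma readily follows from Lemma \ref{extension}," so you have simply supplied the routine details. The decomposition into three regimes ($\mathbb{C}^\pm$, finite real points, real poles of $\omega$) is the natural one, and the key observation that $\alpha\in\mathrm{supp}(\mu)$ can never be hit by $\omega$ away from its poles, combined with cancellation of the simple poles, is exactly how the lemma follows from parts (b), (c), (d) of Lemma \ref{extension} and the inequality $\Im\omega(z)\ge\Im z$.
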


\noindent {\bf Proof.} 
This lemma readily follows  from Lemma \ref{extension}. 
$\Box$

\begin{proposition} \label{uniformconvergence}
Almost surely, for any $\eta > 0$, the sequence $(M_N)_{N\geq N_0}$, where $N_0\geq N_1$ is such that $\forall N \geq N_0,~\eta_N < \eta$,  converges to 
$$M = \mathrm{diag}(1 - (\theta_1-\alpha) \chi, \ldots , 1 - (\theta_J-\alpha) \chi),$$ 
uniformly on compact subsets of $\mathbb{C}\setminus K_\eta ^\mathbb{C}$.
\end{proposition}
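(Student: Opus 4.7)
The plan is to prove this in three steps: first, reduce via concentration to a deterministic statement about $P\mathbb{E}[R_N(\lambda)]{}^tP$; second, identify the limit of this deterministic sequence through an approximate matricial subordination identity for $\mathbb{E}[R_N(\lambda)]$; third, upgrade pointwise convergence to locally uniform convergence on compact subsets of $\mathbb{C}\setminus K_\eta^{\mathbb{C}}$ for every $\eta>0$ by a normal-family argument.

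For the first step, fix a countable dense subset $D\subset\mathbb{C}\setminus\mathbb{R}$. By Lemma \ref{concentration}(i), for each $\lambda\in D$ the difference $PR_N(\lambda){}^tP-P\mathbb{E}[R_N(\lambda)]{}^tP$ tends to zero almost surely; intersecting over $D$ yields a single almost sure event on which this convergence holds throughout $D$. Intersect further with the almost sure event from the preceding section on which $\sigma(A_N'+U_N^*B_NU_N)\subset K_{\eta_N}^{\mathbb{R}}$. On this event, the bound $\|R_N(\lambda)\|\leq 1/d(\lambda,K_{\eta_N}^{\mathbb{R}})$ makes $(R_N)_N$ and $(\mathbb{E}[R_N])_N$ normal families on every $\mathbb{C}\setminus K_\eta^{\mathbb{C}}$ with $\eta>\eta_N$, and Vitali's theorem extends pointwise convergence to locally uniform convergence of $PR_N{}^tP-P\mathbb{E}[R_N]{}^tP$ to zero on each such domain. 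It thus suffices to prove that $P\mathbb{E}[R_N(\lambda)]{}^tP\to\chi(\lambda)I_r$ locally uniformly on $\mathbb{C}\setminus K_\eta^{\mathbb{C}}$.

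For the second step, the natural route is to establish an approximate matricial subordination identity
\begin{equation*}
\mathbb{E}[R_N(\lambda)]^{-1}=\omega_N(\lambda)I_N-A_N'+\varepsilon_N(\lambda),
\end{equation*}
where $\omega_N\colon\mathbb{C}^+\to\mathbb{C}^+$ is a scalar function and $\|\varepsilon_N(\lambda)\|\to 0$ locally uniformly on $\mathbb{C}\setminus K_\eta^{\mathbb{C}}$. Such an identity is obtained via integration by parts against the Haar measure on $\mathbb{U}_N$: differentiating resolvent quantities of the form $\mathbb{E}[R_N\cdot U_N^*B_NU_N]$ along vector fields of $\mathbb{U}_N$ produces closed algebraic relations between $\mathbb{E}[R_N]$, $A_N'$, and trace moments of $B_N$, and the variance bound of Lemma \ref{concentration}(ii) controls the remaining fluctuation terms, which are absorbed into $\varepsilon_N$. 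The scalar $\omega_N$ satisfies a fixed-point equation that, using the weak convergences $\mu_{A_N'}\to\mu$ and $\mu_{B_N}\to\nu$, converges to the scalar fixed-point equation characterising the subordination function $\omega$ of $\mu\boxplus\nu$ described in Section \ref{freeconv}; hence $\omega_N(\lambda)\to\omega(\lambda)$ first on $\mathbb{C}^+$ and then on $\mathbb{C}\setminus K_\eta^{\mathbb{C}}$ by Lemma \ref{extension}. Since the first $r$ diagonal entries of $A_N'$ equal $\alpha$, extracting the upper-left $r\times r$ block of $(\omega_N(\lambda)I_N-A_N')^{-1}$ yields $(\omega_N(\lambda)-\alpha)^{-1}I_r\to\chi(\lambda)I_r$, and multiplying on the right by $\Theta$ gives the claimed diagonal limit for $M_N=I_r-PR_N{}^tP\Theta$.

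The principal obstacle is the second step: the integration-by-parts computation must control the fluctuation terms in operator norm, not merely in normalised trace, so that the asymptotics survive restriction to the upper-left $r\times r$ corner and propagate through matrix inversion; here the entrywise variance estimate of Lemma \ref{concentration}(ii) and the uniform invertibility bound of Lemma \ref{l} are both essential. A second subtlety is the identification of the limit of $\omega_N$ as the genuine subordination function $\omega$ rather than some spurious root of a related equation: this rests on the uniqueness of solutions to the subordination system recorded in Section \ref{freeconv} and on the boundary behavior of $\omega$ assembled in Lemma \ref{extension}. Once the matricial subordination identity is established, the concentration reduction and the Vitali-type upgrade are standard.
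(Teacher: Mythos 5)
Your proposal follows essentially the same three-stage strategy as the paper: a concentration reduction to $P\mathbb{E}[R_N]{}^tP$, an approximate matricial subordination identity $\mathbb{E}[R_N(z)]\approx(\omega_N(z)I_N-A_N')^{-1}$ obtained by integration by parts on $\mathbb{U}_N$ together with the variance bound of Lemma \ref{concentration}(ii) and the invertibility bound of Lemma \ref{l}, and a Vitali-type upgrade via Theorem \ref{complexanalysislemma}. One ingredient of the paper that you do not mention explicitly, but which matters, is Lemma \ref{bicommutant}: it shows that $\mathbb{E}[R_N(z)]$ lies in the bicommutant of $A_N'$, so that $P\mathbb{E}[R_N(z)]{}^tP$ is a scalar multiple of $I_r$ \emph{exactly}, not only approximately, which is what legitimizes defining the scalar $\omega_N(z)$ from the single entry $\mathbb{E}[R_N(z)]_{11}$.

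The genuine technical issue in your write-up is the claim that $(\mathbb{E}[R_N])_N$ forms a normal family on $\mathbb{C}\setminus K_\eta^{\mathbb{C}}$, and more generally that one should prove $P\mathbb{E}[R_N(\lambda)]{}^tP\to\chi(\lambda)I_r$ locally uniformly up to the real axis. The Collins--Mal\'e inclusion $\mathrm{sp}(A_N'+U_N^*B_NU_N)\subseteq K_{\eta_N}^{\mathbb{R}}$ holds on an almost sure event and only eventually in $N$; for a \emph{fixed} $N$ it is not a probability-one statement. Consequently, for real $\lambda$ near (but outside) $K$, there is no uniform bound on $\|R_N(\lambda)\|$ valid under the expectation, and $\mathbb{E}[R_N(\lambda)]$ need not be bounded (or even integrable) there: the only bound at hand is $\|\mathbb{E}[R_N(z)]\|\leq 1/|\Im z|$, which degenerates on $\mathbb{R}$. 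The paper is careful on exactly this point: it proves $P\mathbb{E}[R_N(z)]{}^tP\to\chi(z)I_r$ and $\omega_N(z)\to\omega(z)$ only for $z\in\mathbb{C}\setminus\mathbb{R}$, combines this with Lemma \ref{concentration}(i) to get almost sure pointwise convergence of $PR_N(z){}^tP$ on the countable set $\mathcal{D}_\eta$ (which lies strictly off $\mathbb{R}$), and then applies Theorem \ref{complexanalysislemma} to the \emph{random} entries $(R_N)_{kl}$, which, on the almost sure event and for $N$ large, are holomorphic and bounded by $1/d(z,K_\eta^{\mathbb{R}})$ on $\mathbb{C}\setminus K_\eta^{\mathbb{C}}$. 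Splitting the Vitali argument between $R_N-\mathbb{E}[R_N]$ and $\mathbb{E}[R_N]$, as you do, requires control of $\mathbb{E}[R_N]$ near $\mathbb{R}$ that the hypotheses do not furnish; the fix is to apply Vitali once, to $PR_N{}^tP$ itself, exactly as the paper does. The same remark applies to your assertion that $\omega_N\to\omega$ ``on $\mathbb{C}\setminus K_\eta^{\mathbb{C}}$ by Lemma \ref{extension}'': Lemma \ref{extension} concerns the limit function $\omega$ only, and does not by itself extend the prelimit $\omega_N$ to the real axis.
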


\noindent
Since $$M_N=I_r-PR_N{}^tP\Theta ,$$ it is equivalent to prove the convergence of 
$PR_N{}^tP$ towards $\chi I_r$.

\

\noindent
We first study the convergence of the sequence of analytic functions $(\mathbb{E}[R_N])_{N\geq 1}$: 

\begin{proposition} \label{estimationinmean}
\begin{equation} \label{secondapproximation}
\forall z\in \mathbb{C}\setminus \mathbb{R}, ~~P\mathbb{E}[R_N(z)]{}^tP 
\underset{N\rightarrow +\infty}{\longrightarrow }\chi(z)I_r.
\end{equation}
\end{proposition}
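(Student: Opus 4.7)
My strategy is to first reduce the claim to a scalar convergence statement by exploiting the invariance of the Haar measure, then establish an approximate matricial subordination identity that pins down that scalar, and finally identify its limit via the uniqueness of the free additive subordination function.

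\textbf{Step 1 (symmetry reduction).} By right-invariance of the Haar measure on $\mathbb{U}_N$, $U_N$ has the same law as $U_NV$ for any deterministic unitary $V$. Taking $V=W\oplus I_{N-r}$ with $W\in\mathbb{U}_r$ arbitrary gives $V^*A_N'V=A_N'$, so $V^*R_N(z)V$ has the same law as $R_N(z)$. Taking expectations yields $V^*\mathbb{E}[R_N(z)]V=\mathbb{E}[R_N(z)]$ for every such $V$, which in turn forces the top-left $r\times r$ block $M$ of $\mathbb{E}[R_N(z)]$ to satisfy $WM=MW$ for all $W\in\mathbb{U}_r$; hence $P\mathbb{E}[R_N(z)]{}^tP=c_N(z)I_r$ for some scalar $c_N(z)$. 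It remains to show $c_N(z)\to \chi(z)=1/(\omega(z)-\alpha)$.

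\textbf{Step 2 (approximate matricial subordination).} The key analytic input is an identity of the form
$$\mathbb{E}[R_N(z)]=\bigl(\omega_N(z)I_N-A_N'\bigr)^{-1}+\Delta_N(z),\qquad z\in\mathbb{C}\setminus\mathbb{R},$$
where $\omega_N(z)\in\mathbb{C}^+$ is the scalar determined self-consistently by
$$g_N(z):=\tfrac{1}{N}\Tr\mathbb{E}[R_N(z)]=\tfrac{1}{N}\Tr\bigl((\omega_N(z)I_N-A_N')^{-1}\bigr),$$
and the remainder $\Delta_N(z)$ vanishes in operator norm, uniformly on compact subsets of $\mathbb{C}\setminus\mathbb{R}$. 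This equation is obtained by Haar integration-by-parts, i.e.\ by differentiating the identity $\mathbb{E}[F(U_N)]=\mathbb{E}[F(e^{itH}U_N)]$ at $t=0$ for suitable Hermitian $H$ and matrix-valued test functions $F$ built from $R_N$. The resulting fluctuation terms, which involve traces of products of $R_N$ with deterministic matrices, are controlled by the entrywise variance estimate of Lemma \ref{concentration}(ii); the a priori bound $\|\mathbb{E}[R_N(z)]^{-1}\|\leq |z|+C_1+4C_2/|\Im z|$ of Lemma \ref{l} guarantees that $\omega_N(z)$ stays in a region where $(\omega_N(z)I_N-A_N')^{-1}$ is uniformly bounded.

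\textbf{Step 3 (identification of the limit).} Taking the normalized trace of the matricial subordination gives $g_N(z)=G_{\mu_{A_N'}}(\omega_N(z))+o(1)$. By the Collins--Male theorem \cite{ColMal11}, $g_N(z)\to G_{\mu\boxplus\nu}(z)$; combined with the weak convergence $\mu_{A_N'}\to\mu$ and the uniform control of supports, any accumulation point $\omega_\infty(z)\in\mathbb{C}^+$ of $\omega_N(z)$ satisfies $G_{\mu\boxplus\nu}(z)=G_\mu(\omega_\infty(z))$. The uniqueness of the subordination function recalled in Section \ref{freeconv} then identifies $\omega_\infty=\omega$, so that $\omega_N(z)\to\omega(z)$. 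Reading off any of the first $r$ diagonal entries of the subordination identity yields
$$c_N(z)=\frac{1}{\omega_N(z)-\alpha}+o(1)\;\longrightarrow\;\frac{1}{\omega(z)-\alpha}=\chi(z),$$
which is the desired convergence. The main obstacle is Step 2: the Haar integration-by-parts calculation needed to produce a matricial subordination equation whose remainder genuinely vanishes uniformly on compacts of $\mathbb{C}\setminus\mathbb{R}$ is the technical heart of the matrix-valued subordination approach to unitarily invariant deformations.
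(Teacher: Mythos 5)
Your three-step plan tracks the paper's proof closely: Step 1 corresponds to the bicommutant Lemma \ref{bicommutant} (your Schur-lemma argument with $W\oplus I_{N-r}$ is a self-contained special case of it), Step 2 to Lemma \ref{approximatesubordination}, and Step 3 to Lemma \ref{scalarconvergence}. That said, two points in your outline would need to be repaired before they could serve as a proof.

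First, defining $\omega_N(z)$ ``self-consistently'' by the trace equation $\frac1N\Tr\mathbb{E}[R_N(z)]=\frac1N\Tr((\omega_N(z)I_N-A_N')^{-1})$ is not safe: the right-hand side equals $G_{\mu_{A_N'}}(\omega_N(z))$, and $G_{\mu_{A_N'}}\colon\mathbb{C}^+\to\mathbb{C}^-$ is in general neither surjective nor injective, so a solution $\omega_N(z)\in\mathbb{C}^+$ may fail to exist or to be unique for $\Im z$ small. The paper sidesteps this by \emph{defining} $\omega_N(z):=1/\mathbb{E}[R_N(z)]_{11}+\alpha$ (always well-defined since $\Im\mathbb{E}[R_N(z)]_{11}<0$, and inheriting the crucial bound $\Im\omega_N(z)\ge\Im z$ from \eqref{BPV}), and then showing, via the Haar integration-by-parts identity and Lemma \ref{concentration}(ii), that the remaining diagonal entries of $\mathbb{E}[R_N(z)]^{-1}+A_N'$ differ from this one by $O(1/N)$. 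Second, in Step 3 you cannot conclude $\omega_\infty=\omega$ merely from $G_\mu(\omega_\infty(z))=G_{\mu\boxplus\nu}(z)$: $G_\mu$ is generally not injective on $\mathbb C^+$, so for a fixed $z$ this equation may have several solutions, and ``uniqueness of the subordination function'' refers to uniqueness among analytic self-maps with the right normalization at infinity, which you have not yet verified for $\omega_\infty$. The paper's fix is to use $\Im\omega_N(z)\ge\Im z$ to restrict to $\Im z$ large (where $G_\mu$ is invertible near $0$), invert there uniformly in $N$ to get a subsequence-independent limit, and then extend the identity $\lim\omega_N=\omega$ to all of $\mathbb C^+$ by normal families and analytic continuation. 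With these two repairs your argument coincides with the paper's.
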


\noindent 
Fix $z\in \mathbb{C}^+$ (which is sufficient by a reflection argument). 
We now break the proof in three lemmas. 
First, a strenghtening of the result from \cite{Kargin11} stating
that that the matrix 
$\mathbb{E}[R_N(z)]$ is diagonal:
\begin{lemme}\label{bicommutant}
If $b\in M_N(\mathbb C)$ is so that $b-U_N^*B_NU_N$ is invertible
for each value of the Haar unitary $U_N$, then $\mathbb E\left[(b-U_N^*
B_NU_N)^{-1}\right]\in\{b\}''$, where the bicommutant $\{b\}''$
is taken in $M_N(\mathbb C)$.
\end{lemme}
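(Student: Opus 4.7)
The plan is to derive an equivariance identity from Haar invariance, then differentiate it in the unitary direction and extend the resulting commutator relation to all of $M_N(\mathbb{C})$ by holomorphicity. Set $\Phi(b):=\mathbb{E}[(b-U_N^*B_NU_N)^{-1}]$ and $R_N(b):=(b-U_N^*B_NU_N)^{-1}$. The first step is to check that for every $V\in U(N)$,
\[
V^*\Phi(b)V=\Phi(V^*bV),
\]
which comes from rewriting $V^*(b-U_N^*B_NU_N)V=V^*bV-(U_NV)^*B_N(U_NV)$ and invoking the right invariance of Haar measure under $U_N\mapsto U_NV$. Specialising to $V\in U(N)\cap\{b\}'$ already shows that $\Phi(b)$ commutes with every unitary in $\{b\}'$, but when $b$ is not normal $\{b\}'$ is in general not a $*$-subalgebra and need not be spanned over $\mathbb{C}$ by its unitary elements, so this alone is not enough to reach the full bicommutant.

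To gain the remaining elements of $\{b\}'$, I would differentiate the equivariance. Since $\Phi$ is holomorphic in $b$ on its domain, with Fr\'echet derivative $D\Phi|_b(h)=-\mathbb{E}[R_N(b)\,h\,R_N(b)]$, taking $V(t)=e^{tc}$ for a skew-Hermitian $c\in\mathfrak{u}(N)$ and differentiating $V(t)^*\Phi(b)V(t)=\Phi(V(t)^*bV(t))$ at $t=0$ yields the commutator identity
\[
[\Phi(b),c]=D\Phi|_b([b,c])\qquad(\star)
\]
for every $c\in\mathfrak{u}(N)$. Both sides of $(\star)$ are $\mathbb{C}$-linear in $c$, and since $\mathfrak{u}(N)+i\mathfrak{u}(N)=M_N(\mathbb{C})$, the identity $(\star)$ propagates by $\mathbb{C}$-linearity to all $c\in M_N(\mathbb{C})$.

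With $(\star)$ available for arbitrary $c$, the conclusion is immediate: any $c\in\{b\}'$ satisfies $[b,c]=0$, so $(\star)$ forces $[\Phi(b),c]=0$, and since $c\in\{b\}'$ was arbitrary, $\Phi(b)\in\{b\}''$. The delicate point in the argument is the extension from $\mathfrak{u}(N)$ to all of $M_N(\mathbb{C})$; it uses crucially that $\Phi$ is holomorphic (so that $D\Phi|_b$ is $\mathbb{C}$-linear, not merely $\mathbb{R}$-linear), because when $b$ is not normal the obvious attempt to split an element $c\in\{b\}'$ into its Hermitian and anti-Hermitian parts need not keep the pieces inside $\{b\}'$, and the naive unitary-invariance argument genuinely fails.
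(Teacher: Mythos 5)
Your proof is correct, and it shares its starting point with the paper's argument --- the Haar-invariance equivariance $V^*\mathbb{E}[(b-U_N^*B_NU_N)^{-1}]V=\mathbb{E}[(V^*bV-U_N^*B_NU_N)^{-1}]$, and the consequent commutation with every unitary in $\{b\}'$ --- but then goes a step further. The paper's proof stops at the unitary step: it picks an arbitrary unitary $V\in\{b\}'$, shows that $\mathbb{E}[R_N]$ commutes with $V$, and concludes by invoking the fact that ``a von Neumann algebra equals the span of its unitaries.'' This is exactly the move you flag as insufficient for non-normal $b$, and you are right: taking $b=E_{12}$ in $M_2(\mathbb C)$, one has $\{b\}'=\mathrm{span}\{I,b\}$, which is not $*$-closed, and the only unitaries in it are scalar multiples of $I$, so the unitaries in $\{b\}'$ have commutant all of $M_2(\mathbb C)$, strictly larger than $\{b\}''$. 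Thus the paper's argument, read literally for arbitrary $b$, has a gap; it is, however, perfectly valid in the only case the paper actually uses it, namely $b=z-A_N'$ with $A_N'$ Hermitian, where $b$ is normal, $\{b\}'$ is a von Neumann algebra, and the unitary span argument applies. Your differentiation step --- differentiating the equivariance along $t\mapsto e^{tc}$ for $c\in\mathfrak u(N)$ to get $[\Phi(b),c]=D\Phi|_b([b,c])$, then extending to all $c\in M_N(\mathbb C)$ using $\mathbb C$-linearity of the holomorphic derivative together with $\mathfrak u(N)+i\mathfrak u(N)=M_N(\mathbb C)$ --- legitimately upgrades the conclusion to the full generality of the lemma's statement. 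So: same key idea as the paper (Haar conjugation-invariance), but with an additional infinitesimal/holomorphicity argument that closes the gap for non-normal $b$; the paper's shorter proof suffices for its own application.
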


\noindent {\bf Proof.}
Pick an arbitrary unitary $V$ in the commutant $\{b\}'$ of $b$.
By the invariance of the Haar measure, 
\begin{eqnarray*}
V^*\mathbb E\left[(b-U_N^*
B_NU_N)^{-1}\right]V & = & \mathbb E\left[(V^*bV-V^*U_N^*
B_NU_NV)^{-1}\right]\\
& = &
\mathbb E\left[(b-U_N^*B_NU_N)^{-1}\right],
\end{eqnarray*}
so that $\mathbb E\left[(b-U_N^*
B_NU_N)^{-1}\right]\in\{V\}'$. Thus, since
a von Neumann algebra equals the span of its unitaries,
$\mathbb E\left[(b-U_N^*B_NU_N)^{-1}\right]\in\{b\}''$, 
as claimed. $\Box$

\

\noindent
Recall that the first $r$ eigenvalues of $A_N'$ are all equal to
$\alpha$. We apply the above lemma to $b=z-A_N'$ to conclude that 
the first $r$ eigenvalues of $\mathbb E\left[R_N(z)\right]$ are all
equal, and thus $~~P\mathbb{E}[R_N(z)]{}^tP=\chi_N(z)I_r$ for the
Cauchy-Stieltjes transform $\chi_N$ of some probability measure 
depending on $A_N',B_N$.

\

\noindent
Our next task is to establish an approximate matricial subordination equation, 
namely to prove that $\mathbb{E}[R_N(z)]$ is asymptotically 
equal to $(\omega _N(z)I_N-A_N^{'})^{-1}$, for a certain complex number $\omega _N(z)$. 
Then, we prove the uniform convergence on the compact subsets of $\mathbb{C}^+$
of the sequence of analytic functions $(\omega _N)_{N\geq 1}$ towards $\omega $. 

\begin{lemme} \label{approximatesubordination}
For $z\in \mathbb{C}^+$, one has: 
$$\Vert \mathbb{E}[R_N(z)]-(\omega _N(z)I_N-A_N')^{-1}\Vert \underset{N\rightarrow +\infty}{\longrightarrow }0,$$
where 
\begin{equation}\label{omegaN} 
\omega _N(z):=\frac{1}{\mathbb{E}[R_N(z)]_{11}}+ \alpha=\frac{1}{\chi_N(z)}+\alpha .
\end{equation}
\end{lemme}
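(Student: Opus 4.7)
The goal is to establish an approximate matricial subordination identity for $\mathbb{E}[R_N(z)]^{-1}$, from which operator-norm convergence of $\mathbb{E}[R_N(z)]$ to $(\omega_N(z)I_N-A_N')^{-1}$ follows by the resolvent identity.

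\textbf{Structural reduction.} By Lemma \ref{bicommutant} applied to $b=zI_N-A_N'$, one has $\mathbb{E}[R_N(z)]\in\{A_N'\}''$. The same right-Haar-invariance argument (replacing $U_N$ by $U_N W$ for any unitary $W\in\{A_N'\}'$, and using that $W^*V_NR_N(z)W$ is then obtained from $V_NR_N(z)$ by the substitution $U_N\mapsto U_NW$) shows that $\mathbb{E}[V_NR_N(z)]\in\{A_N'\}''$ as well, where $V_N:=U_N^*B_NU_N$. Hence in the eigenbasis of $A_N'$, both matrices, and a fortiori their product
$$S_N(z):=\mathbb{E}[V_NR_N(z)]\cdot\mathbb{E}[R_N(z)]^{-1},$$
are diagonal. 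Taking expectation in $(zI_N-A_N'-V_N)R_N(z)=I_N$ and right-multiplying by $\mathbb{E}[R_N(z)]^{-1}$ yields $\mathbb{E}[R_N(z)]^{-1}=zI_N-A_N'-S_N(z)$. The whole problem reduces to proving $\|S_N(z)-(z-\omega_N(z))I_N\|\to 0$.

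\textbf{Identification of the scalar.} On the $\alpha$-eigenspace of $A_N'$ (the first $r$ diagonal entries), $\mathbb{E}[R_N(z)]$ equals $\chi_N(z)I_r$, and so $S_N(z)$ restricted to that block equals $(z-\alpha-1/\chi_N(z))I_r=(z-\omega_N(z))I_r$ by the very definition of $\omega_N(z)$. Thus the candidate scalar value is forced; the content of the lemma is that this value is asymptotically attained on every eigenspace of $A_N'$.

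\textbf{Scalar collapse --- main step.} This is the technical heart. Differentiating the Haar measure along the flow $U_N\mapsto U_N e^{tH}$ for skew-Hermitian $H$ and using $\mathbb{E}\bigl[\partial_t R_N(z)|_{t=0}\bigr]=0$ yields the Schwinger--Dyson-type identity $\mathbb{E}\bigl[R_N(z)[V_N,H]R_N(z)\bigr]=0$. Specializing $H$ to a spanning family of skew-Hermitian elements of $\{A_N'\}'$ (so that $H$ commutes with $A_N'$ and the resulting identities separate the diagonal entries of $\mathbb{E}[V_NR_N(z)]$), and using the variance estimate of Lemma \ref{concentration}(ii) to decorrelate $V_N$ from $R_N(z)$ inside the expectation up to an error of order $N^{-1/2}$, one obtains
$$\mathbb{E}[V_NR_N(z)]=h_N(z)\,\mathbb{E}[R_N(z)]+o(1)\quad\text{in operator norm,}$$
for a scalar $h_N(z)$. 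Matching the $\alpha$-block via Paragraph 2 then forces $h_N(z)=z-\omega_N(z)+o(1)$. The uniform bound on $\|\mathbb{E}[R_N(z)]^{-1}\|$ from Lemma \ref{l} is what keeps $\|S_N(z)\|$ controlled in the process.

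\textbf{Conclusion and main obstacle.} Since $\Im\omega_N(z)=-\Im\chi_N(z)/|\chi_N(z)|^2>0$ for $z\in\mathbb{C}^+$ ($\chi_N$ being a Cauchy--Stieltjes transform), the matrix $\omega_N(z)I_N-A_N'$ has bounded inverse on $\mathbb{R}$-away-from-support scales. Invoking Lemma \ref{l} for an analogous bound on $\|\mathbb{E}[R_N(z)]^{-1}\|$, one concludes from
$$\mathbb{E}[R_N(z)]-(\omega_N(z)I_N-A_N')^{-1}=\mathbb{E}[R_N(z)]\bigl[(\omega_N(z)I_N-A_N')-\mathbb{E}[R_N(z)]^{-1}\bigr](\omega_N(z)I_N-A_N')^{-1},$$
that the operator-norm difference tends to zero. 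The genuine difficulty is concentrated in the scalar collapse step: while the structural argument (Lemma \ref{bicommutant} and its twin for $\mathbb{E}[V_NR_N]$) reduces $S_N(z)$ to a diagonal matrix, one must still upgrade its a priori $(N-r+1)$ degrees of freedom to a single scalar. This is precisely the quantitative manifestation of the asymptotic freeness of $V_N$ from $\{A_N'\}''$, and it is here that the interplay between Schwinger--Dyson identities, the concentration estimates of Lemma \ref{concentration}, and the inversion bound of Lemma \ref{l} must be orchestrated carefully.
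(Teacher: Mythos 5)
Your structural reduction is essentially the paper's: $\Omega_N(z):=\mathbb E[R_N(z)]^{-1}+A_N'$ (which equals your $zI_N-S_N(z)$) lies in $\{A_N'\}''$ by Lemma \ref{bicommutant}, and the whole point is to show that this element of $\{A_N'\}''$ is within $o(1)$ in operator norm of the scalar $\omega_N(z)I_N$. You correctly identify this collapse to a scalar as the technical heart, and correctly note that the candidate scalar is forced by the $\alpha$-block. The final resolvent-identity step and the use of $\Im\omega_N\geq\Im z$ are also as in the paper.

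However, your proposed mechanism for the scalar collapse does not work as stated, and this is a genuine gap. You suggest running the Schwinger--Dyson identity $\mathbb E[R_N(z)[V_N,H]R_N(z)]=0$ over skew-Hermitian $H\in\{A_N'\}'$. But if $H$ commutes with $A_N'$, then writing $V_N=z-A_N'-R_N(z)^{-1}$ and expanding the commutator gives only $\mathbb E[R_N(z)]H=H\mathbb E[R_N(z)]$, i.e.\ the already-known fact that $\mathbb E[R_N(z)]\in\{A_N'\}''$. No information about the off-block variation of the diagonal entries of $\Omega_N(z)$ comes from such $H$. The paper's argument is opposite in spirit: it takes $X=E_{kl}$, which does \emph{not} commute with $A_N'$ when $k,l$ lie in distinct eigenspaces, exploits the elementary identity $(\Omega_N(z))_{kk}-(\Omega_N(z))_{ll}=(\Omega_N(z)E_{kl}-E_{kl}\Omega_N(z))_{kl}$, and then uses the exact Haar-differentiation identity $\mathbb E[R_N(A_N'X-XA_N')R_N]=\mathbb E[R_N]X-X\mathbb E[R_N]$ to rewrite $\Omega_NX-X\Omega_N=-\mathbb E[R_N]^{-1}\Delta_X\mathbb E[R_N]^{-1}$, where $\Delta_X$ is a covariance of resolvent fluctuations. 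You also misdescribe the role of Lemma \ref{concentration}(ii): it is not used to ``decorrelate $V_N$ from $R_N$'' but to bound $|(\Delta_{E_{kl}})_{kl}|$ by $\mathbb V((R_N)_{kk})^{1/2}\mathbb V((R_N)_{ll})^{1/2}=O(1/N)$ via Cauchy--Schwarz, which, combined with the inversion bound of Lemma \ref{l}, yields a $C'(z)/N$ bound on $|(\Omega_N)_{kk}-(\Omega_N)_{ll}|$ uniformly in $k,l$, hence an $O(1/N)$ operator-norm estimate because $\Omega_N(z)-\omega_N(z)I_N$ is diagonal. Your plan neither exhibits a usable family of test matrices nor gives the exact covariance formula that makes the variance estimate bite, so the scalar collapse remains unproved.
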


\noindent {\bf Proof.}
First notice, using \eqref{BPV}, that $\omega_N$ defined by \eqref{omegaN}, satisfies:
\begin{equation}\label{ImomegaN}
\forall z\in \mathbb{C}^+,\ \Im \omega_N(z)\geq \Im z.
\end{equation}
Fix $z\in \mathbb{C}^+$ and define $$\Omega _N(z):=\mathbb{E}[R_N(z)]^{-1}+A_N',$$
which belongs, according to Lemma \ref{bicommutant}, to $\{A_N'\}''$.
We denote its $k$-th diagonal entry $$(\Omega _N(z))_{kk}=\frac{1}{\mathbb{E}[R_N(z)]_{kk}}+ (A_N')_{kk}.$$ 
Note that $\omega _N(z)=(\Omega _N(z))_{11}$ and that, 
using \eqref{partieimaginaire} and \eqref{ImomegaN}, 
$$\Vert (\omega _N(z)-A_N')^{-1}\Vert \leq \frac{1}{\Im\omega_N(z)}\leq\frac{1}{\Im z}$$
\noindent and
$$\Vert (\Omega _N(z)-A_N')^{-1}\Vert \leq \Vert E[R_N(z)]\Vert \leq \frac{1}{\Im z}.$$
If we prove that: 
$$\exists C'(z) > 0, \forall N\geq 1, \forall (k,l)\in \{1, \ldots , N\}, 
\vert (\Omega _N(z))_{kk}-(\Omega _N(z))_{ll}\vert \leq \frac{C'(z)}{N},$$
then we may conclude:\\

$\Vert \mathbb{E}[R_N(z)]-(\omega _N(z)I_N-A_N')^{-1}\Vert$
\begin{eqnarray*}
 &=&\Vert (\Omega _N(z)-A_N')^{-1}-(\omega _N(z)I_N-A_N')^{-1}\Vert \\
&=&\Vert (\Omega _N(z)-A_N')^{-1}(\omega _N(z)I_N-\Omega _N(z))(\omega _N(z)I_N-A_N')^{-1}\Vert \\
&\leq &\Vert (\Omega _N(z)-A_N')^{-1}\Vert \Vert (\omega _N(z)I_N-\Omega _N(z))\Vert \Vert (\omega _N(z)I_N-A_N')^{-1}\Vert \\
&\leq &\frac{1}{|\Im z|^2}\Vert (\Omega _N(z))_{11}I_N-\Omega _N(z)\Vert \\
&\leq &\frac{C'(z)}{N|\Im z|^2}\\
&\underset{N\rightarrow +\infty}{\longrightarrow }&0\\
\end{eqnarray*}
For $(k,l)\in \{1, \ldots , N\}$, observe that 
\begin{equation}\label{Omega}(\Omega _N(z))_{kk}-(\Omega _N(z))_{ll}=(\Omega _N(z)E_{kl}-E_{kl}\Omega _N(z))_{kl}.\end{equation}
Define, for a given deterministic matrix $X\in M_N(\mathbb{C})$, 
\begin{eqnarray*}
\Delta_X &:=& \mathbb{E}[(R_N(z)-\mathbb{E}[R_N(z)])(A_N'X -XA_N')(R_N(z)-\mathbb{E}[R_N(z)])]\\
&=&\mathbb{E}[R_N(z)(A_N'X -XA_N')R_N(z)]-\mathbb{E}[R_N(z)](A_N'X -XA_N')\mathbb{E}[R_N(z)]\\ 
\end{eqnarray*}
Noting that for any deterministic Hermitian matrix $X$, $$\frac{{d}}{dt}  \mathbb{E}\left( (z-A_N' -e^{-itX} U_N^* B_N U_Ne^{itX})^{-1} \right)_{|_{t=0}}=0,$$
we readily deduce that 
$$\mathbb{E}[R_N(z)(A_N'X -XA_N')R_N(z)]=\mathbb{E}[R_N(z)]X-X\mathbb{E}[R_N(z)],$$
and then extend this identity by linearity to any matrix $X\in M_N(\mathbb{C})$.
It follows that 
\begin{equation} \label{difference}
\Omega _N(z)X-X\Omega _N(z) = -\mathbb{E}[R_N(z)]^{-1}\Delta _X\mathbb{E}[R_N(z)]^{-1}.
\end{equation}
For $X=E_{kl}$, (\ref{Omega}) and (\ref{difference}) yield\\

\noindent $\vert (\Omega _N(z))_{kk}-(\Omega _N(z))_{ll}\vert$
\begin{eqnarray*}
 &=& \vert (\mathbb{E}[R_N(z)]^{-1}\Delta _{E_{kl}}\mathbb{E}[R_N(z)]^{-1})_{kl}\vert \\
&\leq &\vert \mathbb{E}[R_N(z)]^{-1}_{kk}\vert \vert (\Delta _{E_{kl}})_{kl}\vert \vert \mathbb{E}[R_N(z)]^{-1}_{ll}\vert \\
&\leq &\left(|z|+C_1+\frac{4C_2}{|\Im z|}\right)^2{\vert (A_N')_{kk}-(A_N')_{ll}\vert }\\
& & \mbox{}\times\mathbb{E}[\vert (R_N(z)-\mathbb{E}[R_N(z)])_{kk}(R_N(z)-\mathbb{E}[R_N(z)])_{ll}\vert ]\\
&\leq &{2\Vert A_N'\Vert } \left(|z|+C_1+\frac{4C_2}{|\Im z|}\right)^2\mathbb{V}((R_N(z))_{kk})^{\frac{1}{2}}\mathbb{V}((R_N(z))_{ll})^{\frac{1}{2}}\\
&\leq &\frac{C\left(\vert z\vert + 1 + \frac{1}{|\Im z|}\right)^2 }{N|\Im z|^4}\\
\end{eqnarray*}
where we used (\ref{borne}) and Lemma \ref{concentration} (ii) in the three last inequalities.
And we are done. $\Box$\\

\noindent 
We now study the convergence of the sequence $(\omega _N)_{N\geq 1}$ defined by (\ref{omegaN}).

\begin{lemme} \label{scalarconvergence}
The sequence of analytic functions $(\omega _N)_{N\geq 1}$ defined on 
$\mathbb{C}\setminus \mathbb{R}$ converges uniformly towards $\omega $ 
on the compact subsets of $\mathbb{C}\setminus \mathbb{R}$. 
\end{lemme}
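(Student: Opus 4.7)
The plan is to argue via Montel's theorem and identify any subsequential limit using the approximate subordination from Lemma~\ref{approximatesubordination}. I will establish convergence on $\mathbb{C}^+$; the extension to $\mathbb{C}^-$ then follows from the conjugation symmetry $\omega_N(\bar z)=\overline{\omega_N(z)}$, itself a consequence of $\mathbb{E}[R_N(\bar z)]=\mathbb{E}[R_N(z)]^*$.

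\textbf{Normal family.} For $z\in\mathbb{C}^+$, the lower bound $\Im\omega_N(z)\ge\Im z$ is given by \eqref{ImomegaN}, while the definition \eqref{omegaN}, combined with the block-diagonal structure established after Lemma~\ref{bicommutant} and with Lemma~\ref{l}, yields
$$|\omega_N(z)|\le|\alpha|+\|\mathbb{E}[R_N(z)]^{-1}\|\le|\alpha|+|z|+C_1+\frac{4C_2}{|\Im z|}.$$
Hence on any compact $K\subset\mathbb{C}^+$ the family $(\omega_N)$ is uniformly bounded and stays in a closed subset of $\mathbb{C}^+$. By Montel's theorem every subsequence admits a locally uniformly convergent sub-subsequence, with limit $\omega^*$ analytic on $\mathbb{C}^+$ and satisfying $\Im\omega^*(z)\ge\Im z$.

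\textbf{Identification of the limit.} Taking the normalized trace of the operator-norm convergence in Lemma~\ref{approximatesubordination} gives
$$\bigl|\mathbb{E}[g_{X_N'}(z)]-G_{\mu_{A_N'}}(\omega_N(z))\bigr|\longrightarrow 0,\qquad X_N':=A_N'+U_N^*B_NU_N.$$
By Voiculescu--Speicher asymptotic freeness (\cite{Voiculescu91,Speicher93a}), $g_{X_N'}(z)$ converges almost surely to $G_{\mu\boxplus\nu}(z)$ and is dominated by $1/|\Im z|$, so dominated convergence gives $\mathbb{E}[g_{X_N'}(z)]\to G_{\mu\boxplus\nu}(z)$. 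Since $\mu_{A_N'}$ converges weakly to $\mu$ with uniformly bounded supports, $G_{\mu_{A_N'}}\to G_\mu$ uniformly on regions of the form $\{w\in\mathbb{C}^+:\Im w\ge c,|w|\le M\}$, in which $\omega_{N_k}(z)$ lies for $z$ ranging over a compact of $\mathbb{C}^+$. Passing to the limit,
$$G_\mu(\omega^*(z))=G_{\mu\boxplus\nu}(z),\qquad z\in\mathbb{C}^+.$$

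\textbf{Uniqueness of the limit.} Expanding $\mathbb{E}[R_N(z)]^{-1}$ around infinity, exactly as in the proof of Lemma~\ref{l}, yields $\omega_N(z)=z-m_1(\mu_{B_N})+O(1/|z|)$ with remainder uniform in $N$, so $\omega^*(z)=z-m_1(\nu)+O(1/|z|)$ for $|z|$ large. Since $G_\mu(w)=1/w+O(1/w^2)$ at infinity, it has an analytic inverse $G_\mu^{-1}$ on a neighborhood of $0$. Thus for $|z|$ sufficiently large both $\omega^*(z)$ and $\omega(z)$ lie in the domain where $G_\mu$ is univalent, and $G_\mu(\omega^*(z))=G_{\mu\boxplus\nu}(z)=G_\mu(\omega(z))$ forces $\omega^*(z)=\omega(z)$ there. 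Analytic continuation extends this equality to all of $\mathbb{C}^+$. Since every subsequence of $(\omega_N)$ has a sub-subsequence tending to $\omega$, the full sequence converges to $\omega$ locally uniformly on $\mathbb{C}^+$, and by reflection on $\mathbb{C}\setminus\mathbb{R}$.

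The main obstacle is the uniqueness step: the equation $G_\mu(w)=G_{\mu\boxplus\nu}(z)$ may have several roots in $\mathbb{C}^+$ because $G_\mu$ is not globally univalent there. What saves us is the uniform-in-$N$ asymptotic $\omega_N(z)\sim z$ at infinity, which forces $\omega^*$ onto the same branch as $\omega$, where $G_\mu$ is invertible. Controlling this asymptotic uniformly in $N$ is the delicate point, and it is precisely what the power-series estimates of Lemma~\ref{l} provide.
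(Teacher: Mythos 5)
Your argument follows essentially the same path as the paper's: normal family via the two-sided bounds on $\omega_N$ (lower bound $\Im\omega_N\geq\Im z$ from \eqref{ImomegaN}, upper bound from Lemma~\ref{l}), tracing the approximate matricial subordination of Lemma~\ref{approximatesubordination}, passing to the limit using asymptotic freeness, and pinning the subsequential limit to $\omega$ by inverting the Cauchy transform near infinity and continuing analytically. The one cosmetic difference is the uniqueness step: the paper takes $\Im z$ large and uses $\Im\omega_N(z)\geq\Im z$ to force $\omega_N(z)$ into the univalence domain, while you take $|z|$ large and invoke the uniform-in-$N$ expansion $\omega_N(z)=z-m_1(\mu_{B_N})+O(1/|z|)$ from the proof of Lemma~\ref{l} -- both are fine. (Note that your limit relation $G_\mu(\omega^*)=G_{\mu\boxplus\nu}$ is the correct one for $\omega=\omega_1$; the paper's display \eqref{equationapprochee} writes $G_\nu$ where, by the subordination convention of Section~3.2, $G_\mu$ is meant.)
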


\noindent {\bf Proof.}
It follows from Lemma \ref{approximatesubordination}, by taking the normalized trace, and using the notation 
$$g_N(z):=\mathbb{E}[G_{\mu_{A_N'+U_N^*B_NU_N}}(z)]$$ that 
$$g_N(z)-G_{\mu _{A_N'}}(\omega _N(z))\underset{N\rightarrow +\infty}{\longrightarrow }0.$$
Using Lemma 7.7 in \cite{Capitaine11} and \eqref{partieimaginaire}, we deduce that 
\begin{equation} \label{equationapprochee} 
g_N(z)-G_{\nu}(\omega _N(z))\underset{N\rightarrow +\infty}{\longrightarrow }0.
\end{equation}
The sequence of analytic functions $(\omega _N)_{N\geq 1}$ is normal, 
and thus there exists at least one converging subsequence. 
For any fixed $z\in \mathbb{C}^+$, let us consider a converging subsequence 
$\omega _{\phi (N)}(z)$ of $\omega _N(z)$ and denote by $l(z)$ the limit. 
As noted above in \eqref{ImomegaN}, $\Im \omega _N(z)\ge\Im z$.  
Thus, if $\Im z$ is large enough, $|\omega _N(z)|$ will be large, 
and we can then uniquely invert with respect to composition uniformly 
in $N$ in \eqref{equationapprochee} to obtain $G_\nu ^{-1}(g_N(z)-o(1))=\omega _N(z)$. 
Letting $N$ go to infinity, Voiculescu's asymptotic freeness result guarantees that 
$g_N(z)\to G_{\mu \boxplus \nu }(z)$. 
Thus, $l(z)=G_\nu ^{-1}(G_{\mu \boxplus \nu }(z))$, independent of the convergent subsequence 
$\omega _{\phi (N)}(z)$ we chose. 
This implies that $\lim_{N\to\infty}\omega _{N}=l$ uniformly on compact sets of 
$\mathbb{C}^+$, and by analytic continuation that $l=\omega $.
Therefore, for any  $z \in \mathbb{C}^+$,  $\omega (z)$ is the  
unique cluster point of  $\omega _N(z)$. $\Box$\\

\noindent {\bf Proof of Proposition \ref{estimationinmean}.}
Fix $z\in \mathbb{C}^+$ (which is sufficient by a reflection argument).  
We proved that
\begin{equation}
P\mathbb{E}[R_N(z)]{}^tP = \frac{1}{\omega _N(z)-\alpha }I_r.
\end{equation}
We simply conclude by using Lemma \ref{scalarconvergence} 
and $\Im \omega_N(z) \geq \Im z$, $\Im \omega(z) \geq \Im z$. $\Box$\\

\noindent 
It will be important in the proof of Proposition \ref{uniformconvergence} 
to have the following analytic continuation result. 

\begin{theoreme} \label{complexanalysislemma}(lemma 7.6.5 \cite{Chatterji})
Let  $\mathcal{A}$ be an open nonempty subset of $\mathbb{C}$ 
and $D\subset \mathcal{A}$ such that $\overline{D}=\overline{\mathcal{A}}$. 
Let $(g_n)$ be a sequence of locally bounded holomorphic functions on 
$\mathcal{A}$ such that $\lim_{n\rightarrow +\infty } g_n(d)$ exists for any $d\in \mathcal{D}$.
Then, the sequence $(g_n)$ converges towards a function $g$ which is holomorphic on 
$\mathcal{A}$, uniformly on each compact subset of $\mathcal{A}$.
\end{theoreme}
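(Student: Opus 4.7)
The statement is a version of the classical Vitali--Porter theorem, so the natural plan is to argue via normal families. First I would note that local boundedness of $(g_n)$ on $\mathcal{A}$ is precisely the hypothesis of Montel's theorem, which guarantees that $(g_n)$ is a normal family: every subsequence admits a further subsequence converging uniformly on every compact subset of $\mathcal{A}$. By Weierstrass's theorem, any such uniform limit is automatically holomorphic on $\mathcal{A}$. So the content of the statement really reduces to showing that the full sequence converges, i.e.\ that all subsequential limits coincide.

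To this end, suppose $(g_{\phi(n)})$ and $(g_{\psi(n)})$ are two subsequences converging uniformly on compact subsets of $\mathcal{A}$ to holomorphic functions $g$ and $h$ respectively. For any $d\in D$, both $g(d)$ and $h(d)$ equal the pointwise limit $\lim_{n\to+\infty}g_n(d)$, which exists by hypothesis. Hence $g=h$ on $D$. Since $D\subset\mathcal{A}$ and $\overline{D}=\overline{\mathcal{A}}$, every point of the open set $\mathcal{A}$ is a limit point of $D$; in particular $D$ meets every connected component of $\mathcal{A}$ and has accumulation points there. By continuity (or, more strongly, by the identity principle applied on each connected component), $g\equiv h$ on $\mathcal{A}$.

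Therefore every convergent subsequence of $(g_n)$ has the same limit, which I call $g$; this $g$ is holomorphic on $\mathcal{A}$. To conclude that the entire sequence converges uniformly on an arbitrary compact $K\subset\mathcal{A}$, I would argue by contradiction: if not, there would exist $\varepsilon>0$ and a subsequence $(g_{\phi(n)})$ with $\sup_{z\in K}|g_{\phi(n)}(z)-g(z)|\geq\varepsilon$ for all $n$. Extracting from $(g_{\phi(n)})$ a further subsequence converging uniformly on compacts (possible by normality), its limit must be $g$ by the uniqueness argument above, contradicting the lower bound $\varepsilon$. Hence $g_n\to g$ uniformly on each compact subset of $\mathcal{A}$.

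The main obstacle is essentially bookkeeping rather than a deep difficulty: one must verify carefully that the hypothesis $\overline{D}=\overline{\mathcal{A}}$ suffices to force uniqueness of the subsequential limit on every connected component of $\mathcal{A}$. This follows from the elementary observation that an open set $\mathcal{A}\subset\overline{D}$ forces $D$ to be dense in $\mathcal{A}$, so that continuity of $g$ and $h$ alone already yields $g=h$; the identity principle is not strictly needed, though it gives a cleaner argument when one only knows $D$ has an accumulation point in each component.
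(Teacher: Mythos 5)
Your proof is correct, and it is the standard normal-families (Vitali--Porter) argument: Montel gives normality from local boundedness, Weierstrass gives holomorphy of any locally uniform limit, the pointwise limit on $D$ pins down all subsequential limits on a dense subset of $\mathcal{A}$ (and hence everywhere, by continuity), and the usual contradiction argument upgrades subsequential convergence to convergence of the whole sequence on compacts. The paper itself does not prove this statement but simply cites it as Lemma 7.6.5 of Chatterji's textbook, so there is no in-paper proof to compare against; your argument is exactly the proof one would find in such a reference, and the one subtle point you flag --- that $\overline{D}=\overline{\mathcal{A}}$ together with openness of $\mathcal{A}$ forces $D$ to be dense in $\mathcal{A}$ --- is handled correctly.
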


\noindent {\bf Proof of Proposition \ref{uniformconvergence}.}
Define $$\mathcal{D}_\eta =\{z \in  \mathbb{C}\setminus K_{\eta}^{\mathbb{C}}, 
\Re z\in \mathbb{Q}, \Im z \in \mathbb{Q}^*\}.$$
According to Lemma \ref{concentration} (i)  and \eqref{secondapproximation}, 
for any $z \in \mathcal{D}_\eta $, almost surely, 
$PR_N(z){}^tP$ converges towards $\chi(z)I_r$.
Hence, almost surely, for any $k,l \in \{1, \ldots , r\}$, 
for any $\eta > 0$, $(R_N)_{kl}$ is a bounded sequence of holomorphic functions 
on $\mathbb{C}\setminus K_{\eta}^{\mathbb{C}}$ such that 
the limit of $(R_N(z))_{kl}$ exists for any $z\in \mathcal{D}_\eta $. 
Therefore, according to Theorem \ref{complexanalysislemma}, 
we can deduce that, almost surely, $(R_N(z))_{kl}$ converges 
towards an holomorphic function $\chi_{kl}$ 
on $\mathbb{C}\setminus K_{\eta}^{\mathbb{C}}$, 
uniformly on each compact subset of $\mathbb{C}\setminus K_{\eta}^{\mathbb{C}}$.
Of course, $\chi_{kl}$ coincides with $\frac{\delta_{kl}}{\omega (z)-\alpha }$ on $\mathbb{C}^+$ 
so that $\chi_{kl}=\delta _{kl}\chi$. The proof is complete. $\Box$\\

\noindent 
We will now prove Theorem \ref{mainresult}, by applying Lemma \ref{alt-Benaych-Rao} 
on an almost sure event on which its assumptions are satisfied. 

\

\noindent {\bf Proof of Theorem \ref{mainresult}.} 
We consider the almost sure event, whose existence is guaranteed by Proposition \ref{uniformconvergence}, 
on which there exists an integer $N_1$, a sequence $(\eta _N)_{N\geq N_1}$ 
of positive numbers converging to $0$, 
so that $$\mathrm{sp}(A_N'+U_N^*B_NU_N)\subseteq K_{\eta _N}^\mathbb{R}$$
and, for any $\eta > 0$, the sequence $(M_N)_{N\geq N_0}$, 
where $N_0\geq N_1$ is such that $\forall N \geq N_0,~\eta_N < \eta$,  converges to 
$$M = \mathrm{diag}(1 - (\theta_1-\alpha)\chi, \ldots , 1 - (\theta_J-\alpha)\chi),$$ 
uniformly on compact subsets of $\mathbb{C}\setminus K_\eta ^\mathbb{C}$. 
On this event, apply Lemma \ref{alt-Benaych-Rao} to the sequence $(M_N)_{N\geq N_0}$ and its uniform limit $M$. 
The funtion $M\colon\overline{\mathbb{C}}\setminus K\to M_r(\mathbb C)$
is indeed a normal-operator-valued analytic function satisfying 
trivially conditions (a) and (b) of Lemma \ref{alt-Benaych-Rao}. 
The sequence $(M_N)_{N\geq N_0}$ consists of (random) analytic maps on
$\overline{\mathbb{C}}\setminus K_{\eta_N}^{\mathbb{R}}$. Condition 3. of Lemma \ref{alt-Benaych-Rao} 
is guaranteed by Proposition \ref{uniformconvergence}. Condition 2. is straightforward. 
To check condition 1., it is sufficient to argue that for any $z$ such that $\vert z \vert >C_1$,
$$\Vert R_N(z) \Vert \leq \frac{1}{d(z,[-C_1,C_1])},$$ 
where $C_1$ is chosen as in Lemma \ref{l}.  
Almost every $\eta >0$ is such that the boundary points of $K_\eta^{\mathbb{R}}$ 
are not zeroes of $\det(M)$, so for such $\eta $'s, we get exactly the conclusion of Theorem \ref{mainresult}. 
Indeed, as explained in Section 3.2, eigenvalues of $X_N$ in $\mathbb{C}\setminus K_{\eta }^{\mathbb{R}}$ 
are exactly zeroes of $\det(M_N)$, and the set of points $z$ such that 
$M(z)$ is not invertible is precisely $O$.
$\Box$\\

\def\cprime{$'$}

$\ $

$\ $

S. T. Belinschi: Queen's University and Institute of Mathematics 

``Simion Stoilow'' of the Romanian Academy.

Address: Department of Mathematics and Statistics, 

Queen's University,
Jeffrey Hall, 

Kingston, ON K7L 3N6, Canada

Email: {sbelinsch@mast.queensu.ca}

$\ $

H. Bercovici: Indiana University.

Address: {Department of Mathematics,}

{ Indiana University, Rawles Hall,}

Bloomington, IN 47405, USA.

Email: {bercovic@indiana.edu}

$\ $

M. Capitaine: CNRS Toulouse.

Address: {CNRS, Institut de Math\'ematiques de Toulouse,}

 Equipe de Statistique et Probabilit\'es,

F-31062 Toulouse Cedex 09, France.

Email: {mireille.capitaine@math.univ-toulouse.fr}

$\ $

M. F\'evrier: Universit\'e Paris Sud.

Address: {Universit\'e Paris Sud,
Laboratoire de Math\'ematiques}, 

B\^{a}t. 425 91405 Orsay Cedex, France.

Email: {maxime.fevrier@math.u-psud.fr}


\begin{thebibliography}{10}

\bibitem{AGZ10}
G.~W. Anderson, A. Guionnet, and O. Zeitouni.
\newblock {\em An introduction to random matrices}, volume 118 of {\em
  Cambridge Studies in Advanced Mathematics}.
\newblock Cambridge University Press, Cambridge, 2010.

\bibitem{Arnold}
L. Arnold. On the asymptotic distribution of the eigenvalues of random matrices.
\newblock {\em J. Math. Anal. Appl.} 20:262–268, 1967.


\bibitem{BaiYao08b}
Z.~D. {Bai} and J.~{Yao}.
On sample eigenvalues in a generalized spiked population model.  
\newblock {\em J. Multivariate Anal.}, doi:10.1016/j.jmva.2011.10.009 

\bibitem{BaiYin}Z.~D. Bai and Y. Q. Yin.
 Necessary and sufficient conditions for almost
sure convergence of the largest eigenvalue of a Wigner matrix.
\newblock {\em Ann. Probab.},  16:
1729–1741, 1988.
\bibitem{BBP05}
J.~Baik, G.~Ben~Arous, and S.~P{\'e}ch{\'e}.
\newblock Phase transition of the largest eigenvalue for nonnull complex sample
  covariance matrices.
\newblock {\em Ann. Probab.}, 33(5):1643--1697, 2005.

\bibitem{BaikSil06}
J.~Baik and J.~W. Silverstein.
\newblock Eigenvalues of large sample covariance matrices of spiked population
  models.
\newblock {\em J. Multivariate Anal.}, 97(6):1382--1408, 2006.

\bibitem{BelBer07}
S.~T. Belinschi and H.~Bercovici.
\newblock A new approach to subordination results in free probability.
\newblock {\em J. Anal. Math.}, 101:357--365, 2007.

\bibitem{BPV12}
S.~T. Belinschi, M.~Popa, and V.~Vinnikov.
\newblock Infinite divisibility and a non-commutative {B}oolean-to-free
  {B}ercovici-{P}ata bijection.
\newblock {\em J. Funct. Anal.}, 262(1):94--123, 2012.

\bibitem{Belinschi08}
S. Belinschi.
\newblock The {L}ebesgue decomposition of the free additive convolution of two
  probability distributions.
\newblock {\em Probab. Theory Related Fields}, 142(1-2):125--150, 2008.

\bibitem{BercoviciVoiculescu}
H. Bercovici and D. Voiculescu.
\newblock Free convolution of measures with unbounded support.
\newblock {\em Indiana Univ. Math. J.}, 42(3):733--773, 1993.

\bibitem{BGRao09}
F.~{Benaych-Georges} and R.~R. {Nadakuditi}.
\newblock {The eigenvalues and eigenvectors of finite, low rank perturbations
  of large random matrices}.
\newblock {\em Advances in Mathematics}, 227(1): 494--521, 2011.

\bibitem{Biane97b}
P. Biane.
\newblock On the free convolution with a semi-circular distribution.
\newblock {\em Indiana Univ. Math. J.}, 46(3):705--718, 1997.

\bibitem{Biane98}
P. Biane.
\newblock Processes with free increments.
\newblock {\em Math. Z.}, 227(1):143--174, 1998.

\bibitem{Capitaine11}
M.~{Capitaine}.
\newblock {Additive/multiplicative free subordination property and limiting
  eigenvectors of spiked additive deformations of Wigner matrices and spiked
  sample covariance matrices}.
\newblock {\em Journal of Theoretical Probability}, 2012, DOI: 10.1007/s10959-012-0416-5.

\bibitem{CDF09}
M.~Capitaine, C.~Donati-Martin, and D.~F{\'e}ral.
\newblock The largest eigenvalues of finite rank deformation of large {W}igner
  matrices: convergence and nonuniversality of the fluctuations.
\newblock {\em Ann. Probab.}, 37(1):1--47, 2009.

\bibitem{CDFF10}
M.~Capitaine, C.~Donati-Martin,  D.~F{\'e}ral and M. F\'evrier.
\newblock Free convolution with a semi-circular distribution and  eigenvalues of spiked deformations of  {W}igner
  matrices. 
 \newblock {\em Electronic Journal of Probability}, 16: 1750--1792, 2011.

\bibitem{Chatterji} S. D. Chatterji.
Cours d'analyse 
2 Analyse complexe
Presses polytechniques et universitaires romandes, 1997.

\bibitem{ColMal11}
B.~{Collins} and C.~{Male}.
\newblock {The strong asymptotic freeness of Haar and deterministic matrices}.
\newblock {\em ArXiv e-prints}, May 2011.

\bibitem{FePe}
D.~F{\'e}ral and S.~P{\'e}ch{\'e}.
\newblock The largest eigenvalue of rank one deformation of large {W}igner
  matrices.
\newblock {\em Comm. Math. Phys.}, 272(1):185--228, 2007.

\bibitem{MAXIME}
M. F\'evrier.
Infinitesimal freeness and deformed matrix models.
Doctorat de l'Universit\'e Paul Sabatier 2010

\bibitem{Fulton98}
W.~Fulton.
\newblock Eigenvalues of sums of {H}ermitian matrices (after {A}. {K}lyachko).
\newblock {\em Ast\'erisque}, (252):Exp.\ No.\ 845, 5, 255--269, 1998.
\newblock S{\'e}minaire Bourbaki. Vol. 1997/98.

\bibitem{FurKom81}
Z.~F{\"u}redi and J.~Koml{\'o}s.
\newblock The eigenvalues of random symmetric matrices.
\newblock {\em Combinatorica}, 1(3):233--241, 1981.

\bibitem{GarnettBook}
John~B. Garnett.
\newblock {\em Bounded analytic functions}, volume~96 of {\em Pure and Applied
  Mathematics}.
\newblock Academic Press Inc. [Harcourt Brace Jovanovich Publishers], New York,
  1981.

\bibitem{GloVid73}
J.~Globevnik and I.~Vidav.
\newblock A note on normal-operator-valued analytic functions.
\newblock {\em Proc. Amer. Math. Soc.}, 37:619--621, 1973.

\bibitem{John}
I. Johnstone.
\newblock On the distribution of the largest eigenvalue in principal components analysis.
\newblock {\em Ann. Stat. }, 29:295--327, 2001.

\bibitem{Kargin11}
V.~{Kargin}.
\newblock {Subordination of the resolvent for a sum of random matrices}.
\newblock {\em ArXiv e-prints}, September 2011.

\bibitem{LV}  
P. Loubaton and P. Vallet. 
Almost sure localization of the eigenvalues in a Gaussian information-plus-noise model. Application to the spiked models 
2010 Available at http://front.math.ucdavis.edu/1009.5807.

\bibitem{Peche06}
S.~P{\'e}ch{\'e}.
\newblock The largest eigenvalue of small rank perturbations of Hermitian
random matrices.
\newblock {\em Probab. Theory Related Fields}, 134:127--173, 2006.


\bibitem{PRS} A. Pizzo, D. Renfrew, A. Soshnikov
On Finite Rank Deformations of Wigner Matrices.
To appear in Annales de l'Institut Henri Poincar\'e (B) Probabilit\'es et Statistiques, 2011.

\bibitem{RaoSil09}
N.~R. {Rao} and J.~W. {Silverstein}.
\newblock {Fundamental limit of sample generalized eigenvalue based detection
  of signals in noise using relatively few signal-bearing and noise-only
  samples}.
\newblock {\em IEEE Journal of Selected Topics in Signal Processing},
  4(3): 468--480, 2010.


\bibitem{Remmert84}
Reinhold Remmert.
\newblock {\em Funktionentheorie. {I}}, volume~5 of {\em Grundwissen Mathematik
  [Basic Knowledge in Mathematics]}.
\newblock Springer-Verlag, Berlin, 1984.

\bibitem{Speicher93a}
R. Speicher.
\newblock Free convolution and the random sum of matrices.
\newblock {\em Publ. Res. Inst. Math. Sci.}, 29(5):731--744, 1993.

\bibitem{Voiculescu86}
D.~Voiculescu.
\newblock Addition of Certain Non commuting Random Variables
\newblock {\em J. Funct. Anal.}, 66:323--346, 1986.

\bibitem{Voiculescu91}
D.~V. Voiculescu.
\newblock Limit laws for random matrices and free products.
\newblock {\em Invent. Math.}, 104(1):201--220, 1991.

\bibitem{Voiculescu00}D.~V. Voiculescu.
\newblock The coalgebra of the free difference quotient and free probability.
\newblock {\em Internat. Math. Res. Notices}, (2):79--106, 2000.

\bibitem{VDN92}
D.~V. Voiculescu, K.~J. Dykema, and A.~Nica.
\newblock {\em Free random variables}, 
\newblock A noncommutative probability approach to free products with
  applications to random matrices, operator algebras and harmonic analysis on
  free groups.
volume~1 of {\em CRM Monograph Series}.
\newblock American Mathematical Society, Providence, RI, 1992.
\bibitem{Y}
Y. Q. Yin, Z. D. Bai, and P. R. Krishnaiah. 
On the limit of
the largest eigenvalue of the large-dimensional sample covariance
matrix. 
\newblock {\em Probab. Theory Related Fields}, 78(4): 509-521, 1988.
\end{thebibliography}
\end{document}